\newcommand{\vt}{\vspace{.3cm}}
\newcommand{\Cc}{{\mathbb C}}
\newcommand{\ka}{{\kappa}}
\newcommand{\benu}{\begin{enumerate}}
\newcommand{\enu}{\end{enumerate}}
\newcommand{\beqna}{\begin{eqnarray}}
\newcommand{\eqna}{\end{eqnarray}}
\newcommand{\beqnast}{\begin{eqnarray*}}
\newcommand{\eqnast}{\end{eqnarray*}}
\newcommand{\beqn}{\begin{equation}}
\newcommand{\eqn}{\end{equation}}
\newcommand{\beqnst}{\begin{equation*}}
\newcommand{\eqnst}{\end{equation*}}
\newcommand{\wtil}{\tilde{w}}
\newcommand{\um }{\mathbf{1}_A}
\newcommand{\Hom}{\operatorname{Hom}}
\newcommand{\ot}{\otimes}
\newcommand{\m}{{}^{-1}}
\newcommand{\bu}{\bullet}
\newtheorem{thm}{Theorem}[section]
\newtheorem{defi}[thm]{Definition}
\theoremstyle{definition}
\newtheorem{ex}[thm]{Example}
\newtheorem{remark}[thm]{Remark}
\begin{document}

\begin{center}
\Large{\bf Globalization of Twisted partial Hopf actions}\footnote[0]
{The third and the fourth authors were partially supported by Fapesp of Brazil. The first and third authors were also partially supported by CNPq of Brazil.}

\end{center}

\begin{center}
{\bf Marcelo M. S. Alves$^\dag$, Eliezer Batista$^\ddag$, Michael Dokuchaev$^\triangledown$ and Antonio Paques$^\blacktriangledown$}\\

\vt

{ \footnotesize

$\dag$ Departamento de Matem\'atica\\
Universidade Federal do Paran\'a\\
81531-980, Curitiba, PR, Brazil\\
E-mail: {\it marcelomsa@ufpr.br}\\

$^\ddag$ Departamento de Matem\'atica\\ 
Universidade Federal de Santa Catarina\\
88040-900, Florian\'opolis, SC, Brazil\\
E-mail: {\it eliezer1968@gmail.com}\\ 

$^\triangledown$ Instituto de Matem\'atica e Estat\'istica\\
Universidade de S\~ao Paulo\\
05508-090, S\~ao Paulo, SP, Brazil \\
E-mail: {\it dokucha@ime.usp.br}\\

$^\blacktriangledown$Instituto de Matem\'atica\\
Universidade Federal do Rio Grande do Sul\\
91509-900, Porto Alegre, RS, Brazil\\
E-mail: {\it paques@mat.ufrgs.br}}
\end{center}

\begin{abstract} In this work, we review some properties of twisted partial actions of Hopf algebras on unital algebras and give  necessary and sufficient conditions for a twisted partial action to have a globalization. We also  elaborate a series of examples.
\end{abstract}

\section{Introduction}

Partial group actions arose in  operator algebra theory in  \cite{E-1}, \cite{Mc}, \cite{E0}, \cite{E1}, in order to characterize  $C^*$-algebras generated by partial isometries as more  general crossed product. The algebraic study of partial actions and related notions  was initiated in 
 \cite{E1}, \cite{DEP}, \cite{St1}, \cite{St2}, \cite{KL} and \cite{DE}, motivating further investigations.  In particular, the Galois theory of partial group actions developed in \cite{DFP} inspired  further Galois theoretic results in \cite{BP}, \cite{CaenDGr},   \cite{FrP},  \cite{KuoSzeto}, \cite{PRSantA},  as well as the  introduction and study of partial Hopf actions and coactions in \cite{CJ}, which, in turn,   became  the starting point for further research in partial Hopf (co)actions in \cite{AAB},  \cite{AB2}, \cite{AB},  \cite{AB3}, \cite{ABDP} and \cite{ABV}. 
 
 The concepts of a twisted partial action and the corresponding 
crossed product, introduced  for $C^*$-algebras in \cite{E0}, and adapted for abstract rings in \cite{DES},  suggested the idea of extending these notions for Hopf algebras, unifying  twisted partial group actions, partial Hopf actions and twisted actions of Hopf algebras. This was done in \cite{ABDP},  where such crossed products were related to the so-called  partially cleft extensions of algebras, and   examples were elaborated using algebraic groups.

 One obtains partial (group or Hopf)  actions by restricting global ones to (non-necessarily) invariant  ideals, and it is a central problem to discover under which conditions a given partial action is globalizable, i.e. it can be obtained (up to isomorphism) as a restriction of a global action.

  The study of the globalization problem of partial group actions  was initiated  in  the PhD Thesis by F. Abadie \cite{Abadie} (see also  \cite{AbadieTwo}), and independently by B. 
  Steinberg in  \cite{St2}, and J. Kellendonk and M. V. Lawson in \cite{KL}. Other globalization results were obtained in \cite{AB},  \cite{AB3},  \cite{BCFP}-\cite{BeF},  \cite{CF2}, \cite{CFMarcos}, \cite{DE}, \cite{DES2}, \cite{DdRS}, \cite{EGG}, \cite{F}, \cite{Gil}, \cite{Megre2}.  In particular, in \cite{AB2} a glo\-ba\-li\-za\-tion theorem for  the case of partial actions of Hopf algebras on unital algebras was proved, whereas in \cite{DES2} the  globalization of twisted partial group actions on rings was investigated. The importance of the  glo\-ba\-li\-za\-tion problem lies in the possibility to relate partial actions  with  global ones and try to use known results on global actions to obtain more general facts.

 The aim of this article is to go further on the investigation initiated in \cite{ABDP} by considering  the globalization of  twisted partial Hopf actions. In Section~\ref{SymTwParAc}, we recall some basic concepts about  twisted partial actions of Hopf  algebras, which we formalize now in the form which is most convenient for our purpose.   We start with the concept of a partial measuring map of a bialgebra (or Hopf algebra) $H$ on a unital algebra $A$, giving, in our examples, a complete characterization of these maps in the case of $H$ being the group algebra $\kappa G$, the dual $(\kappa G)^*$ of the group algebra  $ \kappa G $ of a finite group $G,$ and the Sweedler Hopf algebra $H_4$, and $A$ coinciding (in all three cases) with  the base field $\kappa $. Then we proceed with the concept of a twisted partial action of a Hopf algebra (or, more generally, a bialgebra),  in which we include all properties needed to guarantee that the partial crossed product is  associative and unital. Next we consider the symmetric twisted partial Hopf actions. Symmetric twisted partial actions are described  in details for specific Hopf algebras. In particular,  the case of a group algebra $\kappa G$ recovers  the theory of twisted partial actions of groups as developed in \cite{DES} and \cite{DES2}. For the Sweedler Hopf algebra $H_4$, the only symmetric twisted partial actions are the global ones. Furthermore,  the dual of the group algebra $(\kappa G)^*$ of a finite group $G,$ the partial  cocycles appearing have remarkable symmetries, and we relate them to  global cocycles of  dual group algebras of quotient groups.  We do thoroughly the specific example for the Klein four-group $K_4$, acting on the base field $\kappa ,$ and show that the symmetric twisted partial actions of $(\kappa K_4)^{\ast}$ on $\kappa $ are parametrized by the zeros
 $(x, y) \in \kappa ^2$ of a polynomial in $x, y$ of degree $2.$ 

 Section 3 is dedicated to the globalization theorem itself. The precise meaning of globalization of a symmetric twisted partial action of a Hopf algebra $H$ on a unital algebra $A$ given by a symmetric pair of partial cocycles $\omega$ and $\omega '$ is the existence of an algebra $B$ (not necessarily unital) such that there is a twisted (global) action of $H$ on $B$ with convolution invertible cocycle $v:H\otimes H \rightarrow B$ and an algebra monomorphism $\varphi :A\rightarrow B$ such that its image $\varphi (A)$ is a unital ideal in $B,$ and the restriction of this global twisted action is isomorphic to the original twisted partial action on $A$. The main result can be formulated as follows:  a symmetric twisted partial action of a Hopf algebra $H$ on a unital algebra $A$ associated to the symmetric pair of partial cocycles $\omega$ and $\omega '$,  is globalizable if, and only if, there exists a normalized convolution invertible linear map $\tilde{\omega} :H\otimes H\rightarrow A$ satisfying certain compatibility conditions for intertwining the partial action of $H$ on $A$ and the restriction of the twisted action of $H$ on $B$. The question for uniqueness of a globalization is not so straightforward to be addressed in this context as it is for partial actions of groups \cite{DE} and partial actions of Hopf algebras \cite{AB}. Even the form of the algebra $B$ cannot be given in a simple way for the general case. We conclude the section with two examples. The first one shows that the example of a twisted partial Hopf action, constructed in \cite{ABDP} using the relation between algebraic groups and Hopf algebras, is globalizable. The second one is an explicit partial cocycle for   $(\kappa K_4)^{\ast}$ which leads to a globalizable symmetric twisted partial action.
 
 \section{Symmetric twisted partial actions}\label{SymTwParAc}

In  this paper, unless otherwise explicitly stated,  $\kappa $ will denote  an  arbitrary (associative)  unital commutative ring and unadorned $\otimes$ will mean $\otimes_{\kappa }$. We also omit the sum in the Sweedler notation for the comultiplication of a Hopf algebra. {In what follows the symbol $1$ will stand for the unit element $1_{ \kappa }$  of $\kappa .$ }

\begin{defi} \label{defi:measure} Let $H$ be a $\kappa $-bialgebra, $A$ a unital $\kappa $-algebra with unit element $\um $. We say that $H$ partially measures $A$ if there is a linear map 
\[
\begin{array}{rccc} \cdot : & H\otimes A & \to & A ,\\
\, & h\otimes a & \mapsto &h\cdot a,
\end{array}
\]
satisfying the following conditions:
\begin{enumerate}
\item[(PM1)] $1_H\cdot a=a$, for every $a\in A$, \label{unitpartial}
\item[(PM2)] $h\cdot (ab)=(h_{(1)}\cdot a)(h_{(2)}\cdot b)$, for every $h\in H$ and $a,b\in A$, \label{productpartial}
\item[(PM3)] $h\cdot (k\cdot \um )=(h_{(1)} \cdot \um )(h_{(2)}k\cdot \um )$, for every $h,k\in H$. \label{associativtyone}
\end{enumerate}
The map $\cdot :H\otimes A \rightarrow A$ is called a measuring map of $H$ on $A$.
\end{defi}

Note that, if $h\cdot \um =\epsilon (h) \um$, then we have the usual notion that $H$ measures $A$.

{ 

\begin{ex} Note that every partial Hopf action in the sense defined by Caenepeel and Janssen \cite{CJ} is an example of a measuring map of a Hopf algebra $H$ on a unital algebra $A$.  Indeed, if  $H$ is a Hopf algebra then a partial action of $H$ on a  unital $\kappa $-algebra $A$  is a linear map 
\[
\begin{array}{rccc}  & H\otimes A & \to & A ,\\
\, & h\otimes a & \mapsto &h\cdot a,
\end{array}
\] which satisfies (PM1), (PM2) and   

\begin{equation}  \label{parAction}
h\cdot (k\cdot a )=(h_{(1)} \cdot \um )(h_{(2)}k\cdot a ), \;\;\text{for every}\;\; h,k\in H, a\in A.
\end{equation}

\end{ex} 

}

It is readily seen that a measuring of a Hopf algebra on the base field is automatically a partial action.  In what follows we revisit three examples of such measurings that appeared previously in \cite{AAB} and \cite{ABV}: We provide more detailed versions of \cite[Ex. 3.7]{AAB} and \cite[Prop. 6.2]{ABV} in Examples \ref{exemplo:grupo} and \ref{exemplo:graduacao} respectively, and 
in Example \ref{exemplo:sweedler} 
we redo the calculations of  \cite[Ex. 3.8]{AAB}, regarding partial actions of the Sweedler algebra on $\kappa$, in a simpler and more readable manner.

\begin{ex} \label{exemplo:grupo}  Let $G$ be a group and $\kappa $ a field. One can classify all the partial measuring maps of the Hopf algebra $\kappa G$ over the base field $\kappa$, which correspond to the partial actions of $G$ on $\kappa .$ The map $\cdot :\kappa G \otimes \kappa \rightarrow \kappa$ can be viewed as a linear functional $\lambda :\kappa G \rightarrow \kappa$ given by $\lambda_g =\lambda (g)=g\cdot 1 $. 
The condition (PM1) implies that $\lambda (e) = 1$ (where $e$ denotes $1_G$) and the condition (PM2) means that $\lambda_g =\lambda_g \lambda_g$, which implies that either $\lambda_g =1$ or $\lambda_g =0$. Consider now the following subset of the group $G$,
\[
L= \{ g\in G \; | \; \lambda_g =1 \} .
\]
By item (PM), we have that $\lambda_g \lambda_h =\lambda_g \lambda_{gh}$, therefore, if $g,h\in L$ then $gh \in L$. In the same way, putting $h=g^{-1}$, one can conclude that if $g\in L$ then $g^{-1} \in L$, and it follows that $L$ is a subgroup of $G$. Therefore, the partial measurings of the Hopf algebra $\kappa G$ over the base field $\kappa$ are classified by the subgroups of $G$.
\end{ex}

\begin{ex} \label{exemplo:graduacao} Let $G$ be a finite group.
We can classify all the partial measuring maps of the Hopf algebra $(\kappa G)^*$ over a field $\kappa .$ 
By \cite{ABV} they correspond to the partial $G$-gradings of $\kappa .$ Again, they can be classified by linear functionals $\lambda :(\kappa G)^* \rightarrow \kappa$, defined by $\lambda (p_g )=p_g \cdot 1$, where $\{ p_g \}_{g\in G}$ is the canonical  basis of  $(\kappa G)^*$. 
We know, from item (PM1), that $\sum_{g\in G} \lambda (p_g )=1$, then there are some $g\in G$ such that $\lambda (p_g )\neq 0$. Again, consider the subset
\[
L= \{ g\in G \; | \; \lambda (p_g ) \neq 0 \} .
\]
The condition (PM3) reads
\[
\lambda (p_g ) \lambda (p_h ) = \lambda (p_{gh^{-1}}) \lambda (p_h ). 
\]
This implies that, if $g,h\in L$ then $gh^{-1} \in L$, therefore $L$ is a subgroup of $G$. Moreover, putting $h=g$, we obtain $\lambda (p_g )= \lambda (p_e )$, for every $g\in L$. Finally, by (PM1), we have
\[
\sum_{g\in G} \lambda (p_g )=\sum_{g\in L} \lambda (p_e )= |L| \lambda (p_e) =1 ,
\] 
which implies that ${\rm char}\; \kappa \nmid |L|$ and 

\[ \lambda (p_g )=\lambda (p_e)=\frac{1}{|L|} , \; \forall g\in L.
\]
 
\noindent Therefore, the partial $G$-gradings of the base field $\kappa$ are classified by the subgroups $L\leq G,$  whose order is not divisible by ${\rm char} \; \kappa $    and the linear functionals $\lambda^L : (\kappa G)^* \rightarrow \kappa$ given by 
\[
\lambda^L (p_g )=\left\{ \begin{array}{lcr} \frac{1}{|L|} & \mbox{if} & g\in L \\ 0 & \mbox{if} & g\not\in L \end{array} \right. .
\]
\end{ex}  

\begin{ex} \label{exemplo:sweedler} Consider the four dimensional Sweedler Hopf algebra $H_4 =H=\langle 1_H,x,g \; | \; x^2 =0, \; g^2 =1_H , \; gx=-xg \rangle$ over a field $\kappa ,$ whose characteristic is not $2,$ with the comultiplication and the counit given by
\[
\Delta (g) =g\otimes g , \quad \epsilon (g)=1, \quad \Delta (x) = x\otimes 1_H +g\otimes x , \quad \epsilon (x )=0 .
\]
Let us classify the partial measuring maps of $H_4$ on the base field $\kappa$. Each partial measuring map is given by a functional $\lambda :H_4 \rightarrow \kappa$ defined as $\lambda_h =\lambda (h) =h\cdot 1$. By (PM1), we have that $\lambda_{1_H} =1$ and by (PM2) we conclude that $\lambda_g =\lambda_g \lambda_g$, then $\lambda_g =1$ or $\lambda_g =0$. Assuming $\lambda_g =1$, we  have from (PM3)
\begin{eqnarray*}
\lambda_g \lambda_x & = & \lambda_g \lambda_{gx} ,  \\
\lambda_x \lambda_g & = & \lambda_x \lambda_{g} +\lambda_g \lambda_{xg} ,  \\
\lambda_{xg} \lambda_g & = & \lambda_{xg} \lambda_{1_H} +\lambda_{1_H} \lambda_x.
\end{eqnarray*}
The first equality implies that $\lambda_x =\lambda_{gx}=-\lambda_{xg}$, and the second leads to $\lambda_x=0$. Therefore, for $\lambda_g =1$, the only solution is the global case $\lambda =\epsilon$. Considering instead $\lambda_g =0$ we obtain from the above equations  again that $\lambda_x =\lambda_{gx} =-\lambda_{xg}$ and, in fact,  no other constraint appears. Therefore, for $\lambda_g =0$ one can assign any value for $\lambda_x \in \kappa$, and each such functional $\lambda$ gives rise to a partial measuring of $H_4$ on $\kappa$. 
\end{ex}

 The definition of a twisted partial action of a bialgebra (or Hopf algebra) $H$ on a unital algebra $A$   was given in  \cite[Definition 1]{ABDP} in a rather general form. We shall use the following version which already incorporates the normalization condition \cite[(16)]{ABDP}, the $2$-cocycle equality    \cite[(17)]{ABDP} and also (iii) of \cite[Definition 2]{ABDP}.

\begin{defi} \label{defi:twisted} A twisted partial action of a bialgebra (or Hopf algebra) $H$ on a unital algebra $A$ consists of two linear maps, $\cdot :H\otimes A\to A$ and $\omega:H\otimes H\to A$ satisfying the following conditions:
\begin{enumerate}
\item[(TPA1)] $H$ partially measures $A$ via the linear map $\cdot$. 
\item[(TPA2)] $(h_{(1)}\cdot (l_{(1)}\cdot a))\omega(h_{(2)} ,l_{(2)}) = \omega(h_{(1)} ,l_{(1)})(h_{(2)}l_{(2)}\cdot a)$, for every $h,l\in H$ and $a\in A$. 
\item[(TPA3)] $\omega (h ,l) =  \omega (h_{(1)} , l_{(1)})(h_{(2)}l_{(2)}\cdot \um )$, for every $h,l\in H$. 
\item[(TPA4)] $\omega (h, 1_H )=\omega (1_H, h)=h\cdot \um$, for every $h\in H$. 
\item[(TPA5)] $(h_{(1)}\cdot \omega (k_{(1)} , l_{(1)}))\omega (h_{(2)}, k_{(2)}l_{(2)})=\omega(h_{(1)} ,k_{(1)})\omega (h_{(2)}k_{(2)} ,l)$, for every $h,k,l\in H$.
\end{enumerate}
The algebra $A$, in its turn, is called a twisted partial $H$-module algebra.
\end{defi}

The map $\omega$ is called a partial 2-cocycle or twisting.
  If $h\cdot \um =\epsilon (h)\um$, then a partial measuring is a usual one and a twisted partial action is merely a twisted action in the usual sense. On the other hand, if the partial 2-cocycle satisfies $\omega (h,k)=h\cdot (k\cdot \um )$, then it is called a trivial partial cocycle and the resulting twisted partial action is the usual partial action, as defined in \cite{CJ}.
  
Note that by \cite[Prop. 1]{ABDP}, Definition~\ref{defi:twisted}  implies 
  
  \begin{equation}\label{Prop1Israel}
   \omega (h,l)= (h_{(1)}\cdot (l_{(1)}\cdot \um ))\omega(h_{(2)} ,l_{(2)}) = (h_{(1)}\cdot  \um)   \omega(h_{(2)} ,l),
  \end{equation} for every $h,l\in H.$ 

\begin{ex}\label{restriction} A source of examples of twisted partial actions can be given by the restriction to a unital ideal of a twisted action. More precisely, let $H$ be a Hopf algebra with invertible antipode and $B$ a, possibly non-unital, algebra measured by $H$ through the linear map $\triangleright :H\otimes B \rightarrow B$. In the non-unital case, we simply consider the condition 
\[
h\triangleright (ab)=(h_{(1)} \triangleright a)(h_{(2)}\triangleright b) ,
\]
since there is no meaning for the expression $h\triangleright 1_B =\epsilon (h)1_B$.
A twisted action of $H$ on $B$ is given by this measuring and a convolution invertible linear map $u:H\otimes H \rightarrow M(B)$, where $M(B)$ is the multiplier algebra of  
$B$, such that 
\begin{eqnarray*}
u(h, 1_H )=u(1_H, h) & = & \epsilon (h) 1_{M(B)} ,\\
(h_{(1)}\triangleright (l_{(1)}\triangleright a)) u(h_{(2)} ,l_{(2)}) & = & u (h_{(1)} ,l_{(1)}) (h_{(2)}l_{(2)}\triangleright a) ,\\
(h_{(1)}\overline{\triangleright} u (k_{(1)} , l_{(1)}))u (h_{(2)}, k_{(2)}l_{(2)}) & = & u(h_{(1)} ,k_{(1)}) u(h_{(2)}k_{(2)} ,l) .
\end{eqnarray*}
Note that the first and the third equalities above are identities of multipliers, $1_{M(B)}$ is simply the identity map in $B$. The map $\overline{\triangleright}:H\otimes M(B) \rightarrow M(B)$ which appears in the third identity requires a more detailed explanation. First, the multiplier algebra $M(B)$ can be viewed as a pull back \cite{JV}
\[
\xymatrix{M(B) \ar[r]^-{\pi_1} \ar[d]_-{\pi_2 }  & \mbox{End}_B (B)  \ar[d]^-{\lambda} \\
{}_B\mbox{End} (B) \ar[r]_-{\rho} & {}_B\mbox{Hom}_B (B\otimes B , B)} 
\]
where the linear map $\lambda :\mbox{End}_B (B)\rightarrow {}_B \mbox{Hom}_B (B\otimes B ,B)$ is given by $\lambda (L)(a\otimes b)=aL(b)$, the map $\rho :{}_B \mbox{End} (B) \rightarrow  {}_B \mbox{Hom}_B (B\otimes B ,B)$ is given by $\rho (R) (a\otimes b)=R(a)b$, and $\pi_1$ and $\pi_2$ are the canonical projections. That is equivalent to the classical definition of the multiplier algebra\footnote{The concept of a multiplier algebra first appeared in the context of C*-algebras \cite{APT}. For abstract algebras its detailed definition can be seen, for example, in \cite{DE}.} as the algebra generated by pairs $(L,R)\in \mbox{End}_B (B) \times {}_B \mbox{End} (B)$ satisfying the identity $aL(b)=R(a)b$. For simplicity, let us denote a multiplier $x=(L,R)$ such that $L(a)=xa$ and $R(a)=ax$.  Then, we can use the universal property of a pull back in order to define
and action $\overline{\triangleright} :H\otimes M(B)\rightarrow M(B)$. First, define two linear maps $\Phi :H\otimes M(B)\rightarrow \mbox{End}_B (B)$ and $\Psi :H\otimes M(B)\rightarrow {}_B\mbox{End} (B)$ respectively by
\begin{eqnarray*}
\Phi (h\otimes x)(b) & = & h_{(1)}\triangleright (x(S(h_{(2)})\triangleright b)) \\
\Psi (h\otimes x)(b) & = & h_{(2)}\triangleright 
((S^{-1}(h_{(1)})\triangleright b)x).
\end{eqnarray*}
It is easy to see that $\Phi (h\otimes x)$ really belongs to $\mbox{End}_B (B)$, indeed
\begin{eqnarray*}
\Phi (h\otimes x )(ab) & = & h_{(1)}\triangleright (x(S(h_{(2)})\triangleright (ab))) \\
& = & h_{(1)}\triangleright (x((S(h_{(3)})\triangleright a)(S(h_{(2)})\triangleright b))) \\
& = & h_{(1)}\triangleright ((x (S(h_{(3)})\triangleright a))(S(h_{(2)})\triangleright b)) \\
& = & (h_{(1)}\triangleright (x (S(h_{(4)})\triangleright a))) (h_{(2)}\triangleright (S(h_{(3)})\triangleright b)) \\
& = & (h_{(1)}\triangleright (x (S(h_{(4)})\triangleright a))) (h_{(2)}S(h_{(3)})\triangleright b) \\
& = & (h_{(1)}\triangleright (x (S(h_{(2)})\triangleright a))) b \\
& = & (\Phi (h\otimes x)(a))b.
\end{eqnarray*}
Similarly, one proves that $\Psi (h\otimes x) \in {}_B\mbox{End} (B)$. Finally, $\lambda \circ \Phi =\rho \circ \Psi$, indeed, consider $a,b\in B$ $h\in H$ and $x\in M(B)$, then
\begin{eqnarray*}
\lambda \circ \Phi (h\otimes x)(a\otimes b) & = & a(h_{(1)}\triangleright (x (S(h_{(2)})\triangleright b))) \\
& = & (h_{(2)}S^{-1} (h_{(1)})\triangleright a)(h_{(3)}\triangleright (x (S(h_{(4)})\triangleright b)))\\
& = & h_{(2)}\triangleright ((S^{-1} (h_{(1)})\triangleright a)(x (S(h_{(3)})\triangleright b)))\\
& = & h_{(2)}\triangleright (((S^{-1} (h_{(1)})\triangleright a)x)(S(h_{(3)})\triangleright b))\\
& = & (h_{(2)}\triangleright ((S^{-1} (h_{(1)})\triangleright a) x)) (h_{(3)}S(h_{(4)})\triangleright b)\\
& = & (h_{(2)}\triangleright ((S^{-1} (h_{(1)})\triangleright a) x))b\\
& = & \rho \circ \Psi (h\otimes x)(a\otimes b) .
\end{eqnarray*}
Therefore, by the universal property of $M(B)$ as a pull back, there is a unique linear map $\overline{\triangleright}:H\otimes M(B)\rightarrow M(B)$ given by
\begin{eqnarray*}
(h\overline{\triangleright} x)b & = & h_{(1)}\triangleright 
(x (S(h_{(2)})\triangleright b)) \\
b (h\overline{\triangleright} x) & = & h_{(2)}\triangleright 
((S^{-1}(h_{(1)})\triangleright b) x).
\end{eqnarray*}

One can restrict this twisted action to an ideal $A= 1_A \; B$, where $1_A$ is a central idempotent in $B$. This generates a twisted partial action of $H$ on $A$ given by the partial measuring
\[
h\cdot a =1_A (h\triangleright a) 
\]
and the partial 2-cocycle 
\[
\omega (h , k)  =  (h_{(1)} \cdot 1_A ) u(h_{(2)} ,k_{(1)})  (h_{(3)}k_{(2)} \cdot 1_A ) .
\]
The proof that this indeed defines a twisted partial action is basically the same as given in \cite{ABDP}.
\end{ex}

We recall the definition of partial crossed product introduced in \cite{ABDP}. Given a twisted partial action $(H,A,\cdot, \omega)$, there is an associative product defined on the vector space $A \otimes H :$
\begin{equation*}(a \otimes h)(b \otimes k)
= a (h_{(1)} \cdot b) \omega (h_{(2)}, k_{(1)}) \otimes h_{(3)} k_{(2)}.
\end{equation*}
The algebra $A \otimes H $ is not  unital in general. Nevertheless, it contains a subalgebra $A \#_\omega H = (A \otimes H)(\um \otimes 1_H)$ with unit element $(\um \otimes 1_H),$ called the  partial crossed product. As a subspace of $A \otimes H$, it is generated by the elements
\[
a \# h = a(h_{(1)} \cdot \um ) \otimes h_{(2)}.
\]

Given any global 2-cocycle $v: H \otimes H \rightarrow \kappa$, where $\kappa$ has the  global $H$-action, $h\cdot 1=\epsilon (h)$,  one usually transfers the algebra structure of the crossed product $\kappa \#_v H$ to $H$ via the canonical isomorphism $\kappa \otimes H \rightarrow H$. This algebra is denoted by  ${}_v H$ and the product is given by 
\[
h \bu _v k = v(h_{(1)},k_{(1)})h_{(2)}k_{(2)}.
\]

If  $(H,\kappa, \cdot, \omega)$ is a twisted partial action then the canonical isomorphism $\kappa \otimes H \simeq H$ defines on $H$ the new product
\[
h \bu_\omega k = (h_{(1)} \cdot 1)\omega (h_{(2)},k_{(1)})h_{(3)}k_{(2)}.
\]
We will denote this algebra by ${}_\omega H$. 
The partial crossed product, which will be denoted by ${}_\omega \underline{H}$, can be identified with the subspace of $H$ generated by the elements $(h_{(1)} \cdot 1)h_{(2)}$, and the product of generators is given by 
\[
((h_{(1)} \cdot 1)h_{(2)} ) \bu _\omega ((k_{(1)} \cdot 1)k_{(2)} ) = (h_{(1)} \cdot 1)\omega (h_{(2)},k_{(1)}) h_{(3)}k_{(2)}.
\]
Later on, we will describe the algebra ${}_\omega \underline{H}$ when $H = \kappa G$ and $H = (\kappa G)^*$.

Recall that  a twisted partial action of a group $G$ on a unital $\kappa$-algebra $A$  is defined in  \cite{DES} as a triple
\[
\left(\{D_g\}_{g\in G}, \{\alpha_g\}_{g\in G},
\{w_{g,h}\}_{(g,h)\in G\times G}\right),
\]
where for each $g, h \in G$, $D_g$ is an ideal of $A,$  $\alpha_g  :  D_{g^{-1}} \to D_g $ is a $\kappa $-algebra isomorphism, $w_{g,h}$ is an invertible multiplier of $D_g D_{gh},$ and   some properties are satisfied.
 If each $D_g$ is  generated by a central idempotent $1_g,$ then the twisted partial actions is called unital, and, as it was pointed out in \cite{ABDP}, this matches our concept  of a partial action of the group algebra $\kappa G$ over $A .$ In this case, $1_g =g\cdot 1_A,$ and  it is reasonable to have under consideration partial actions of a Hopf algebra $H$ over some unital algebra $A$ such that the map ${\bf e}\in \mbox{Hom}_k (H,A)$, given by ${\bf e}(h)=(h\cdot \um )$,  is central with respect to the convolution product. These partial actions are, in some sense, more akin to partial group actions.

Furthermore, in the case of a unital twisted partial group action the  $\omega_{g,h}$'s are invertible elements in $D_g D_{gh}$, for all $g,h\in G$. In particular, if the group action is global, then every element $\omega_{g,h}$ is  invertible  in $A$, which is automatically translated into the Hopf algebra setting by saying that the cocycle $\omega \in \mbox{Hom}_k (H\otimes H ,A)$ is convolution invertible. In the partial case, we have to consider more suitable conditions to replace the convolution invertibility for the cocycle,  and the idea is to define a unital ideal in the convolution algebra $\mbox{Hom}_{\kappa} (H\otimes H , A),$ in which the partial 2-cocycle lives and has an inverse \cite{ABDP}.

Let $A = (A, \cdot, \omega)$ be a twisted partial $H$-module algebra. It is easy to see that the linear maps $f_1, f_2 :H\otimes H \rightarrow A$, defined by $f_1(h,k) = (h \cdot \um)\epsilon(k) $  and $f_2(h,k) = (hk \cdot \um) $, are both (convolution) idempotents in $Hom(H \ot H,A)$. We also have that ${\bf e}$ is an idempotent in $\Hom(H,A)$ (and $f_1(h,k) = {\bf e}(h) \epsilon(k)$). Notice that if $f_1$ and $f_2$ are central in the convolution algebra $\mbox{Hom}_{\kappa} (H\otimes H, A),$ then the partial 2-cocycle $\omega$ lies in the unital ideal $\langle f_1 \ast f_2 \rangle \trianglelefteq \mbox{Hom}_{\kappa} (H\otimes H, A),$ thanks to (TPA3) and (\ref{Prop1Israel}).

\begin{defi} \label{defi:symmetric} \cite{ABDP} Let $H$ be a $\kappa$-bialgebra (or Hopf algebra) and $A$ be a unital $\kappa$-algebra. A symmetric twisted partial action of $H$ on $A$ is given by the data $(H,A, \cdot ,\omega , \omega' )$, such that
\begin{enumerate}
\item[(STPA1)] $(H, A, \cdot , \omega)$ is a twisted partial action of $H$ on $A$.
\item[(STPA2)] The above defined maps $f_1$ and $f_2$ are central in the convolution algebra $\mbox{Hom}_{\kappa} (H\otimes H, A)$.
\item[(STPA3)] There exists a linear map $\omega' \in \langle f_1 \ast f_2 \rangle$ such that $\omega \ast \omega' =\omega' \ast \omega =f_1 \ast f_2$.
\end{enumerate}
The algebra $A$ is called a symmetric twisted partial $H$-module algebra.
\end{defi}

Observe that the unit element  of the ideal $\langle f_1 \ast f_2 \rangle$ is the central idempotent $f_1 \ast f_2$ itself, which thanks to (PM3) reads $f_1 \ast f_2 (h,k)=h\cdot (k\cdot \um )$. Note also that the centrality of $f_1$ in $\mbox{Hom}_{\kappa} (H\otimes H, A)$, automatically implies the centrality of ${\bf e}\in \mbox{Hom}_k (H,A)$. With the aid of the inverse $\omega'$ on can rewrite the condition (TPA2) in  Definition \ref{defi:twisted} as
\begin{equation} \label{twisting:omega-inverse}
h \cdot (k \cdot a) =  \omega(h_{(1)} , k_{(1)})(h_{(2)}k_{(2)} \cdot a)\omega'(h_{(3)} , k_{(3)}). 
\end{equation}

\begin{ex} Any (global) twisted action of a Hopf algebra $H$ on a unital algebra $A$ is automatically symmetric. Indeed, as $h\cdot \um  =\epsilon (h) \um $, we have $f_1 (h,k)=f_2 (h,k)=\epsilon (h) \epsilon (k) \um$. Then both $f_1$ and  $f_2$ are equal to the unit of the convolution algebra $\mbox{Hom}_k (H\otimes H, A)$, which can be absorbed on the left or on the right. More generally, given any (possibly non-unital) twisted $H$-module algebra $B$, with (global) twisted action given by the measuring map $\triangleright :H\otimes B \rightarrow B$ and an invertible and normalized 2-cocyle $u:H\otimes H \rightarrow M(B),$ one can define a twisted partial action of $H$ on any unital ideal $A\trianglelefteq B$, generated by a central idempotent $\um \in B,$ according to Example~\ref{restriction}. Moreover, if $f_1$ and $f_2$ are central in $\mbox{Hom}_{\kappa} (H\otimes H, \kappa)$, then this twisted partial action is symmetric, with the inverse (partial) cocycle given by
\[
\omega' (h , k)  =   (h_{(1)}k_{(1)} \cdot \um ) u^{-1}(h_{(2)} , k_{(2)}) (h_{(3)} \cdot \um ). 
\]
\end{ex}

\begin{remark}\label{rem:co-commutativeCase}  If $H$ is co-commutative and $A$ is commutative, then  for every twisted partial action of $H$ on $A$ the functions $f_1$ and $f_2$ are clearly central, because the convolution algebra $\mbox{Hom}_k (H\otimes H, A)$ is commutative.  Moreover, if  the twisted partial action is symmetric (i.e. satisfies (STPA3)), then  the twisting disappears from   equality (\ref{twisting:omega-inverse}), which now  simply means that $h\cdot (k\cdot a)=(h_{(1)}\cdot \um )(h_{(2)}k \cdot a)$, resulting in  a partial action of $H$ on $A$. Thus in this case a symmetric twisted partial action disassembles into  a pair consisting of a partial action of $H$ on $A$ and a partial $2$-cocycle $\omega $ with  (STPA3).  A partial $2$-cocycle means   now a $\kappa $-linear map   $\omega:H\otimes H\to A$ satisfying  conditions (TPA3)-(TPA5) from Definition~\ref{defi:twisted}.
\end{remark}

\begin{ex} \label{exemplo:grupo2}

Let us consider the specific case of twisted partial actions of a group algebra $\kappa G$ over a field $\kappa$. Then   every twisted partial action of $\kappa G$ on $\kappa$ is automatically symmetric. Indeed, as we know from the Example \ref{exemplo:grupo}, the measuring maps are classified by subgroups $L\leq G$ such that $\lambda_g =1$ if $g\in L$ and $\lambda_g =0$ otherwise. One can classify every twisted partial action of $\kappa G$ on $\kappa$, determining the possible values for the partial $2$-cocycle $\omega.$ Equality  (\ref{Prop1Israel}) gives us, 
\[
\omega (g,h) =\lambda_g \lambda_h \omega (g,h), \quad \forall g,h\in G .
\]
Then, automatically $\omega (g,h)=0$ if one of its two entries does not belong to the subgroup $L$. The cocycle condition (TPA5) in Definition \ref{defi:twisted} reads
\[
\lambda_g \omega (h,k) \omega (g,hk)= \omega (g,h)\omega (gh,k) ,
\]
which, taking $g,h,k\in L$ transfers into   the classical 2-cocycle condition for the group cohomology of the group $L$ with values in the trivial $L$-module $\kappa ^{\ast}:$ 

\[
 \omega (h,k) \omega (g,hk)= \omega (g,h)\omega (gh,k).
\] Conversely, given a  classical $2$-cocycle $\omega : L \times L \to    \kappa ^{\ast}, $  one may extend $\omega $ to  $\kappa G \otimes \kappa G,$  by setting  $\omega   (g,h) = 0$ if $g$ or $h$   does not belong to  $L,$ and obtaining thus a partial $2$-cocycle. Hence the partial $2$-cocycles of $\kappa G$ related to the given partial action of $\kappa G$ on $\kappa $ can be identified with the classical $2$-cocycles of the subgroup $L$ with values in  $\kappa ^{\ast}.$  Obviously, setting ${\omega}'(g,h)={\omega}\m(g,h)$ if and only if both $g$ and $h$ lie in $L, $ we see that 
the twisted partial action $(\kappa G, \kappa , \cdot , \omega )$ is necessarily   symmetric.

In order to describe the  crossed product in this case, let $v$ be a 2-cocycle of $L$ and let $\omega$ be its extension to $\kappa G$ as above. The vector space ${}_\omega \underline{\kappa G}$ is generated by the elements $\lambda_g g$, where $\lambda_g = 1$ if $g \in L$ and zero otherwise, and hence ${}_\omega \underline{\kappa G} = \kappa L$ as a vector space. It also follows that
${}_\omega \underline{\kappa G} = {}_v \kappa L$ as an algebra. Observe that  ${}_v \kappa L$ is the twisted group ring of $L$ over $\kappa $ corresponding the $2$-cocycle $v.$
\end{ex}

\begin{ex} The only symmetric twisted partial actions of the Sweedler Hopf algebra $H=H_4$ on a $\kappa $-algebra $A$   are the global ones. Indeed, for any linear function $\phi :H_4 \otimes H_4 \rightarrow A$, we have
\[
f_1 \ast \phi (x,g) =f_1 (x,g)\phi (1_H,g) +f_1 (g,g) \phi (x,g)= (x \cdot \um)\phi (1_H,g)+ (g\cdot \um) \phi (x,g),
\]
while, on the other hand,
\[
\phi \ast f_1 (x,g)=\phi (x,g) f_1 (1_H,g)+\phi (g,g) f_1 (x,g)=\phi (x,g)  +  \phi (g,g) (x\cdot \um ).
\] Taking $\phi $ with $(g,g) \mapsto \um,$  $(x,g) \mapsto 0,$ and $(1_H,g) \mapsto 0,$  we obtain that $x \cdot \um =0= \epsilon (x) \um ,$  and for  $\phi $ with $(g,g) \mapsto 0,$  $(x,g) \mapsto \um,$ and $(1_H,g) \mapsto 0,$ we get $g \cdot \um = \um  =  \epsilon (g) \um .$ Then by (PM3), 
$$ 0= g\cdot (x \cdot \um) = (g \cdot \um ) ( gx \cdot \um ) = gx \cdot \um,$$ so that $gx \cdot \um = \epsilon (gx) \um, $ and by linearity, $h \cdot \um = \epsilon (h) \um$ for all $h \in H_4.$   Consequently, $f_1$ is central in ${Hom}_{\kappa} (H\otimes H, A)$
exactly when the twisted partial action is global. 

\end{ex}

\begin{ex}  The above example  shows, in particular,  that there is no symmetric twisted partial action of the Sweedler Hopf algebra over the base field $\kappa $ which is not global. Nevertheless, we shall  construct a partial cocycle defining a (non-global) twisted partial action of $H=H_4$ on $\kappa .$

As we have seen in  Example \ref{exemplo:sweedler}, the truly partial measuring maps of $H_4$ on $\kappa$ are given by functionals $\lambda :H_4 \rightarrow \kappa$ such that $\lambda_g =0$ (recall that $\lambda_x$ may have any value in $\kappa $ and $\lambda_x =\lambda_{gx} = - \lambda_{xg}$). Take such a  partial measuring, then  the normalization condition (TPA4) for the partial cocycle gives us
\[
\omega (1_H, h)=\omega (h,1_H)=\lambda _h, \quad \forall h\in H_4 ,
\]
which means that $\omega (1_H,1_H)=1$, $\omega (1_H,g)=\omega (g,1_H)=0$, $\omega (x,1_H)=\omega (1_H,x)=-\omega (xg ,1_H)=-\omega (1_H,xg)=\lambda_x$. The axioms (TPA2) and (TPA3), combined, allows us to write the identities,
\begin{equation}\label{TPA2,3(a)}
\omega (h,k) =\lambda_{h_{(1)}} \omega (h_{(2)}, k) 
\end{equation} and
\begin{equation}\label{TPA2,3(b)}
\omega (h,k)=\lambda_{h_{(1)}} \lambda_{k_{(1)}} \omega (h_{(2)} ,k_{(2)}).
\end{equation} Then (\ref{TPA2,3(a)}) gives   
\begin{align*}
& \omega (g,h)  =  \lambda_g \omega (g,h)=0, \quad \forall h\in H_4 , \\
&\omega (x,x)  =  \lambda_x \omega (1_H,x)+\lambda_g \omega (x,x)=\lambda_x^2 ,\\
& \omega (x,xg)   =  \lambda_x \omega (1_H,xg) +\lambda_g \omega (x,xg)=-\lambda_x^2 ,\\
\end{align*} and (\ref{TPA2,3(b)}) results in 

\begin{align*}
& \omega (h,g)  =  \lambda_{h_{(1)}} \lambda_g \omega (h_{(2)} ,g) =0 , \quad \forall h\in H_4 ,\\
& \omega (xg,x) =  \lambda_{xg} \lambda_x \omega (g,1_H)+\lambda_{xg} \lambda_g \omega (g,x)+\lambda_1 \lambda_x \omega (xg ,1_H) +\lambda_1 \lambda_g \omega (xg,x)=-\lambda_x^2.
\end{align*}
Moreover (\ref{TPA2,3(a)})  and (\ref{TPA2,3(b)}) do not give any  restriction for the value of $\omega (xg,xg).$ Then taking arbitrary $\omega (xg,xg)\in \kappa $ it is readily seen that $\omega : H_4 \otimes H_4 \to \kappa $ also satisfies (TPA3), and consequently (TPA2).    A  list of verifications certify that the cocycle identity (TPA5) holds for $\omega .$        
\end{ex}

\begin{ex} Given a finite group $G$ and a field $\kappa $ with ${\rm char}\; \kappa \nmid |G|,$ in order to classify the symmetric twisted partial actions of $(\kappa G)^*$ on  $\kappa$, one needs first to analyse  the  centrality  of $f_1$ and $f_2.$ Consider a partial measuring map $\lambda :(\kappa G)^* \rightarrow \kappa$ determined by the subgroup $L$, such that $\lambda (p_g )=\frac{1}{|L|}$ if $g\in L$ and $\lambda (p_g )=0$ otherwise. Take any linear function $\phi : (\kappa G)^* \otimes (\kappa G)^* \rightarrow \kappa$, and $g,h\in G$, then
\[
f_1 \ast \phi (p_g, p_h )= \sum_{r,s\in G} f_1 (p_r ,p_s )\phi (p_{r^{-1}g}, p_{s^{-1}h}) =\sum_{r\in L} \lambda ({p_r}) \phi (p_{r^{-1}g}, p_h ) ,
\]
and
\[
\phi \ast f_1 (p_g, p_h )= \sum_{r,s\in G} \phi (p_{gr^{-1}}, p_{hs^{-1}}) f_1 (p_r, p_s )=\sum_{r\in L} \lambda ({p_r}) \phi (p_{gr^{-1}} , p_h ).
\]
Then the equality $f_1 \ast \phi =\phi \ast f_1$ is true for every $\phi$, if, and only if the subgroup $L$ is contained in the center of $G.$ It is readily seen that in this case $f_2$ is also central.

The second step is to define a normalized partial 2-cocycle $\omega :(\kappa G)^* \otimes (\kappa G)^* \rightarrow \kappa$ which is invertible in the unital ideal generated by $f_1 \ast f_2$. If $\omega \ast f_1 \ast f_2 =\omega$, then it is easy to see that $\omega =\omega \ast f_1 =\omega \ast f_2$. Take $g,h\in G$, then
\begin{eqnarray*}
\omega (p_g , p_h ) & = & f_1 \ast \omega (p_g , p_h ) =  \frac{1}{|L|} \sum_{r\in L} \omega (p_{r^{-1}g}, p_h) .
\end{eqnarray*}
Therefore $\omega (p_g ,p_h )=\omega (p_{kg}, p_h )$ for any $k\in L$. For the identity $\omega =\omega \ast f_2$, we have
\begin{eqnarray*}
\omega (p_g , p_h ) & = & f_2 \ast \omega (p_g , p_h ) =  \frac{1}{|L|} \sum_{r\in L} \omega (p_{r^{-1}g}, p_{r^{-1}h}),
\end{eqnarray*}
which leads to the conclusion that $\omega (p_g ,p_h )=\omega (p_{kg}, p_{kh} )$ for any $k\in L$. Joining these two informations, we obtain 
\[
\omega (p_g ,p_h )=\omega (p_{kg} , p_{lh}), \quad \forall k,l \in L ,
\]
and considering the fact that $L$ is a subgroup of the center of $G$, then also
\[
\omega (p_g ,p_h )=\omega (p_{gk} , p_{hl}), \quad \forall k,l \in L .
\]  
Let us say that a $\kappa $-linear function  $\omega : (\kappa  G)^* \otimes (\kappa  G)^*\to \kappa $ is $L$-invariant if $\omega $ satisfies the above two equalities. Notice that  the above computations show that a  $\kappa $-linear function  $\omega : (\kappa  G)^* \otimes (\kappa  G)^*\to \kappa $ is $L$-invariant exactly when $\omega $ is contained in the ideal $\langle f_1 \ast f_2 \rangle $ of $ \mbox{Hom}_{\kappa} ((\kappa  G)^* \otimes (\kappa  G)^*, A)$.

Taking into account this  $L$-invariance of the partial 2-cocycle, the first member of the cocycle identity 
\[
(m_{(1)}\cdot \omega (n_{(1)} , l_{(1)}))\omega (m_{(2)}, n_{(2)}l_{(2)})=\omega(m_{(1)} ,n_{(1)})\omega (m_{(2)}n_{(2)} ,l) ,
\]
for $m=p_g$, $n=p_h$ and $l=p_k$, reads 
\begin{eqnarray*}
(m_{(1)}\cdot \omega (n_{(1)} , l_{(1)}))\omega (m_{(2)}, n_{(2)}l_{(2)}) & = & \sum_{r,s,t\in G} \lambda (p_r ) \omega (p_{hs^{-1}}, p_{kt^{-1}}) \omega (p_{r^{-1}g}, p_s p_t ) \\
& = & \sum_{s\in G} \sum_{r\in L} \frac{1}{|L|} \omega (p_{hs^{-1}}, p_{ks^{-1}}) \omega (p_{r^{-1}g}, p_s )\\
& = & \sum_{s\in G}  \omega (p_{hs^{-1}}, p_{ks^{-1}}) \left( \sum_{r\in L}\frac{1}{|L|}  \omega (p_{g}, p_s ) \right) \\
& = & \sum_{s\in G}  \omega (p_{hs^{-1}}, p_{ks^{-1}}) \omega (p_{g}, p_s ) .
\end{eqnarray*}
The second member, in its turn, is
\begin{eqnarray*}
\omega(m_{(1)} ,n_{(1)})\omega (m_{(2)}n_{(2)} ,l) & = & \sum_{s,t \in G} \omega (p_{gs^{-1}}, p_{ht^{-1}}) \omega (p_s p_t, p_k )\\
& = & \sum_{s\in G} \omega (p_{gs^{-1}}, p_{hs^{-1}}) \omega (p_s , p_k ) .
\end{eqnarray*}
Therefore, the cocycle identity (TPA5)  transforms into
\begin{equation}\label{ParGlobCocycle}
\sum_{s\in G}  \omega (p_{hs^{-1}}, p_{ks^{-1}}) \omega (p_{g}, p_s ) =\sum_{s\in G} \omega (p_{gs^{-1}}, p_{hs^{-1}}) \omega (p_s , p_k ),
\end{equation} for all $g,h,k \in G.$    {It is readily seen that (\ref{ParGlobCocycle}) means that $\omega $ is a global $2$-cocycle of $(\kappa G)^*$ with respect to the trivial action on $\kappa .$  Let $G\setminus  L$ be a full set of representatives of left ($=$ right) cosets of $G$ by $L.$ Then an $L$-invariant  function $\omega \in   \mbox{Hom}_{\kappa} ((\kappa  G)^* \otimes (\kappa  G)^*, A)$ satisfies (\ref{ParGlobCocycle}) if and only if 
\begin{equation}\label{ParGlobCocycle2}
\sum_{s\in G\setminus L}  \omega (p_{hs^{-1}}, p_{ks^{-1}}) \omega (p_{g}, p_s ) =\sum_{s\in G\setminus L} \omega (p_{gs^{-1}}, p_{hs^{-1}}) \omega (p_s , p_k ),
\end{equation} for all $g,h,k \in G\setminus  L.$ 

Now, let $\omega : (\kappa  G)^* \otimes (\kappa  G)^*\to \kappa $ be a partial $2$-cocycle, so that our partial measuring, determines by the central subgroup $L\leq G,$  is a symmetric twisted partial action. Then  taking $v(p_{gL}, p_{hL}) = |L|^2 \omega (p_g, p_h), $ we have,  thanks to the $L$-invariance of $\omega,$ a well-defined $\kappa$-linear map $v: (\kappa  \, G/L )^* \otimes (\kappa  \, G/L )^*\to \kappa  ,$ which in view of (\ref{ParGlobCocycle2})   satisfies the global $2$-cocyle equality with respect to the trivial action on $\kappa .$ The normalization condition (TPA4) of $\omega $ directly implies that $v$ is normalized. Furthermore, since our twisted partial action is symmetric, there exists $\omega ' : (\kappa  G)^* \otimes (\kappa  G)^*\to \kappa $ satisfying (STPA3). Then taking   $u(p_{gL}, p_{hL}) = |L|^2 \omega ' (p_g, p_h), $ we compute that 
\begin{align*} & (v\ast u ) (p_{gL}, p_{hL}) = \sum _{s,t\in G\setminus L} v (p_{g s\m L}, p_{h t\m L})  u  (p_{sL}, p_{tL}) =
|L|^4\sum _{s,t\in G\setminus L} \omega (p_{g s\m }, p_{h t\m })  \omega '  (p_{s}, p_{t})= \\
&\frac{|L|^4}{|L|^2}\sum _{s,t\in G\setminus L} \sum _{l, m\in L }\omega (p_{g s\m l\m }, p_{h t\m m\m})  \omega '  (p_{sl}, p_{tm})= 
|L|^2 \sum _{x, y\in G} \omega (p_{g x\m }, p_{h y\m})  \omega '  (p_{x}, p_{y})=\\
& |L|^2 (\omega \ast \omega ')(p_g, p_h)=  |L|^2 \lambda (p_g) \lambda ( p_h)= \epsilon (p_{gL}) \epsilon (p_{hL}), 
\end{align*} showing that $u$ is the convolution inverse of $v.$ Consequently, $v$ is a normalized global convolution invertible  $2$-cocycle of $(\kappa \, G/L )^*.$

Conversely, given     a normalized convolution invertible global 2-cocycle  $v$  of $(\kappa \, G/L)^*$ with respect to the trivial action of  
$(\kappa \, G/L)^*$ on $\kappa ,$  define the $\kappa $-linear functions $ \omega  , \omega ' : (\kappa  G)^* \otimes (\kappa  G)^*\to \kappa $  by 
$$\omega (p_g , p_h)=\frac{1}{|L|^2} v(p_{gL}, p_{hL})\;\; \text{ and }\;\;  \omega '(p_g , p_h)=\frac{1}{|L|^2} v\m (p_{gL}, p_{hL}).$$ Then  the above considerations imply   that 
we obtain a symmetric twisted partial action  of $(\kappa  G)^*$ on $ \kappa. $

Thus we have one-to-one correspondences between the following three sets:
 \begin{itemize}
\item The partial $2$-cocycles  $\omega : (\kappa  G)^* \otimes (\kappa  G)^*\to \kappa $  with respect to the partial action  of  $(\kappa  G)^*$ on $\kappa ,$ determined by the central subgroup $L \subseteq G,$ which are invertible in the ideal $\langle f_1 \ast f_2 \rangle .$

\item The $L$-invariant global  $2$-cocycles  $\omega : (\kappa  G)^* \otimes (\kappa  G)^*\to \kappa $ with respect to the trivial action  of  $(\kappa  G)^*$ on $\kappa ,$ which satisfy the equalities (TPA4) and (STPA3).

\item The  normalized convolution invertible  global $2$-cocycles  $ v : (\kappa  \, G/L)^* \otimes (\kappa \,  G/L)^*\to \kappa $ with respect to the trivial action  of  $(\kappa  \, G/ L)^*$ on $\kappa .$

 \end{itemize}

Let now $\omega $ be a partial $2$-cocycle such that we have a symmetric twisted partial action (i.e. $\omega $ is as in the first item above), 
and let $v  : (\kappa  \, G/L)^* \otimes (\kappa \,  G/L)^*\to \kappa $ be the corresponding global $2$-cocyle. Then  the map 
\[ \phi : {}_v(\kappa \, G/L)^* \rightarrow   
{}_\omega\underline{(\kappa G)^*}, \;\;\;\;\; 
p_{gL}  \mapsto  \sum_{x \in L} p_{gx}
\]
is an isomorphism of algebras. } In fact, as a vector space, ${}_\omega\underline{(\kappa G)^*}$ is generated by the elements
\[
 \sum _{x\in G} \lambda(p_{x^{-1}}) p_{gx} = \dfrac{1}{|L|}
\sum_{x \in L} p_{gx}.
\]
If $G \setminus L $ is a transversal for $L$ in $G$ then the elements 
\[
\sum_{x \in L} p_{tx}, \;\;\; (t\in G \setminus L),
\]
form a basis of ${}_\omega\underline{(\kappa G)^*}$, which is the image under $\phi$ of the basis $ \{p_{tL}: \; t\in G \setminus L  \}$ of 
${}_v(\kappa \,G/L)^*$, and therefore $\phi$ is a linear isomorphism.

Using the $L$-invariance of $\omega $ we see that the algebra structure on  ${}_\omega(\kappa G)^*$  is defined on basis elements by 
\[
p_g \bu _\omega p_h =  \sum _{r,s,t\in G} \lambda (p_r) \omega (p_{r\m g s\m}, p_{ht\m}) p_s p_t = \sum_{t \in G} \omega(p_{gt^{-1}},p_{ht^{-1}}) p_t.
\]  
From this expression, the $L$-invariance and the definition of $\omega$, it follows that 
\begin{align*}
 \phi (p_{t L})  \bu _\omega \phi (p_{t' L})
 & =    \sum_{s \in G } \sum_{l,m \in L} \omega(p_{t l s^{-1}}, p_{t' ms^{-1} }) p_{s}\\
 & =  |L| ^2  \sum_{s \in G} \omega(p_{t s^{-1} },p_{t' s^{-1} }) p_{s}\\
  & =  |L| ^2  \sum_{s \in G\setminus L}  \omega(p_{t s^{-1} },p_{t' s^{-1} })  \sum _{x \in L}  p_{sx}\\
& = \phi ( |L| ^2  \sum_{s \in G\setminus L}  \omega(p_{t s^{-1} },p_{t' s^{-1} })   p_{s L} )\\
& = \phi (   \sum_{s \in G\setminus L}  v (p_{t s^{-1} L},p_{t' s^{-1}L })   p_{s L} )\\
& = \phi (p_{t L}  \bu _v p_{t' L}), 
\end{align*} with $t, t'\in G \setminus L .$ In addition, it is trivially seen that $\phi $ is unital. 
\end{ex}

\begin{ex} \label{exemplo:klein4} Consider the previous example for the specific case when the group $G$ is the Klein four-group $K_4 =\langle a,b \; | \; a^2 =b^2 =e \rangle .$ Take the subgroup $L=\langle a \rangle$, then the values of the partial measuring map $\lambda$ are $\lambda (p_e ) =\lambda (p_a )=\frac{1}{2}$ and $\lambda (p_b )=\lambda (p_{ab} )=0$. As the group $K_4$ is abelian, the subgroup $L$ is automatically in the center, then we have no obstruction to the centrality of $f_1$ and $f_2$ (moreover, since $K_4$ is abelian,  $(\kappa K_4)^{\ast} $ is co-commutative and, consequently the convolution algebra $\mbox{Hom}_k ((\kappa K_4)^{\ast}\otimes (\kappa K_4)^{\ast}, \kappa)$ is commutative). The  $L$-invariance of the partial $2$-cocycle reads
\begin{eqnarray*}
& \, & \omega (p_e ,p_e )=\omega (p_a , p_e ) =\omega (p_e , p_a ) =\omega (p_a , p_a ) , \\
& \, & \omega (p_e ,p_b )=\omega (p_a , p_b ) =\omega (p_e , p_{ab} ) =\omega (p_a , p_{ab} ) , \\
& \, & \omega (p_b ,p_e )=\omega (p_{ab} , p_e ) =\omega (p_b , p_a ) =\omega (p_{ab} , p_a ) , \\
& \, & \omega (p_b ,p_b )=\omega (p_{ab} , p_b ) =\omega (p_b , p_{ab} ) =\omega (p_{ab} , p_{ab} ) .
\end{eqnarray*}
The normalization condition, $\omega (1_H ,h)=\omega (h, 1_H )=h\cdot \um$, gives us three conditions
\begin{eqnarray*}
\omega (p_e , p_e ) +\omega (p_e ,p_b ) & = & \frac{1}{4} , \\
\omega (p_e , p_e ) +\omega (p_b ,p_e ) & = & \frac{1}{4} , \\
\omega (p_e , p_b ) +\omega (p_b ,p_b ) & = & 0 .
\end{eqnarray*}
Which leads to $\omega (p_b ,p_e )=\omega (p_e, p_b ) =-\omega (p_b , p_b )$, and $\omega (p_e ,p_b )=\frac{1}{4}-\omega (p_e , p_e )$. Therefore, we end up with only one independent value of the partial 2-cocycle, $x=\omega (p_e , p_e)$. One directly checks that with these data, the $2$-cocycle condition is satisfied for arbitrary value of  $x$. Similarly,   $\omega' $ is also $L$-invariant and normalized, and can be determined in a similar form in terms of  $y=\omega '(p_e , p_e).$ Furthermore, a direct computation shows that  the condition $\omega \ast \omega' =f_1 \ast f_2  $ is equivalent to  the equality    
\begin{equation}\label{points}
32xy-6(x+y) +1=0.
\end{equation}
Therefore the symmetric twisted partial actions of $(\kappa K_4)^{\ast}$ on $\kappa $ are parametrized by the  points $(x,y)\in \kappa^2 $ satisfying (\ref{points}). 
\end{ex}

\section{Globalization}

As it was  already seen, taking a twisted action of a Hopf algebra $H$ over an algebra $B$ and then 
restricting it to a unital ideal $A$, one obtains a twisted partial Hopf action of $H$ on $A$. Now, 
given a twisted partial action of a Hopf algebra $H$ on a unital algebra $A$, we can ask what conditions 
must be satisfied on order to view it as a restriction of a global action. The globalization problem was 
solved in several contexts. In particular,  for partial group actions on $C^*$-algebras  
the answer was given by F. Abadie  (see \cite{Abadie} or \cite{AbadieTwo}). For the case of partial group 
actions on unital rings a criterion for the existence of a globalization was given  in  \cite{DE}, whereas  for 
partial Hopf actions the globalization always exists, as it was shown in \cite{AB}. As to the twisted partial group actions,
the globalization is quite more involved as it can be seen  in  \cite{DES2}.  
We recall briefly the meaning of a globalization of a twisted partial group action.  
We denote by ${\cal U}(B)$ the group of invertible elements of an algebra $B.$ Given a twisted partial action of $G$ on an algebra $A$,
$\{ \{ D_g \}_{g\in G} , \{ \alpha_g:D_{g^{-1}}\rightarrow D_g \}_{g\in G}, 
\{ w_{g,h} \in {\cal U} (D_g D_{gh})  \}_{g,h\in G} \}$, a globalization for this twisted partial action is a quadruple $\{B, \beta :G\rightarrow \mbox{Aut}(B), \varphi , u:G\times G \rightarrow {\cal U}(M(B)) \}$,  where $M(B)$ is the multiplier algebra of $B$,
 
such that
\begin{enumerate}
\item $(B,\beta ,u)$ is a twisted action of $G$ on $B$ with cocycle $u$.
\item $\varphi :A\rightarrow B$ is a monomorphism of algebras and $\varphi (A)\trianglelefteq B$.
\item $\varphi (D_g )=\varphi (A)\cap \beta_g (\varphi (A))$.
\item $B=\sum_{g\in G} \beta_g (\varphi (A))$.
\item $\varphi$ intertwines the twisted partial action of $G$ on $A$ with the induced twisted partial action on $\varphi (A)$ obtained by the restriction of the twisted action $\beta$ on $B$.
\end{enumerate}
In this section we will follow some ideas present in \cite{DES2} to construct a globalization  for a twisted partial action of a Hopf algebra $H$ on a unital algebra $A$.

\begin{defi}
Let $(A,(w,w'))$, $(A',(v,v'))$ be two symmetric twisted partial $H$-module algebras. A map $\varphi: A \rightarrow A'$ is an isomorphism of symmetric twisted partial $H$-module algebras if 
\begin{enumerate}[\rm(i)]
\item $\varphi$ is an algebra isomorphism; 
\item  $\varphi(h \cdot a) = h \cdot \varphi(a)$;
\item  $\varphi (w (h,k) ) = v(h,k)$, $\varphi(w'(h,k) ) = v' (h,k),$ 
\end{enumerate} for all $h,k \in H $ and $a\in A.$ 
\end{defi}

\begin{defi} Let $A$ be a symmetric twisted partial $H$-module algebra with the pair $(w,w')$. 
A globalization of $A$ is a pair $(B, \varphi)$, where $B$ is a (possibly non-unital) twisted $H$-module algebra  with invertible cocycle 
$u: H \otimes H \to M(B)$ and $\varphi: A \rightarrow B$ is an algebra monomorphism  such that 
\begin{enumerate}[\rm(i)]
\item $\varphi(A)$ is an ideal in $B$; 
\item $\varphi: A \rightarrow \varphi(A)$ is an isomorphism of symmetric twisted partial $H$-module algebras, where $\varphi(A)$ has the structure induced by $B$. 
\end{enumerate}
If the algebra $B$ is unital then we call the globalization unital. 

\end{defi}

Our main result is as follows.

\begin{thm} \label{teorema:globalizacao} Let $A$ be a symmetric twisted partial $H$-module algebra with the pair $(w,w')$. 
\begin{enumerate}
 \item If this partial action has a globalization then 
 there is a convolution invertible $\kappa$-linear map $\wtil: H \otimes H \rightarrow A$ satisfying
\begin{equation}\label{wtil}
 (h_{(1)} \cdot \wtil (k_{(1)},l_{(1)}))\wtil (h_{(2)},k_{(2)}l_{(2)}) =  (h_{(1)} \cdot \um )\wtil (h_{(2)},k_{(1)}) \wtil (h_{(3)}k_{(2)},l)
\end{equation}
with $\wtil(1_H,h)=\wtil(h,1_H) = \epsilon(h) \um$,  such that 
 $\omega = (f_1 *f_2) *\wtil $ and $\omega' = (f_1 *f_2) *\wtil^{-1}$, i.e.,   
\begin{eqnarray}
\omega (h,k) & = &   (h_{(1)} \cdot \um)(h_{(2)}k_{(1)} \cdot \um ))\wtil (h_{(3)},k_{(2)}), 
\label{globcocycle1} \\
\omega' (h,k) & = &  (h_{(1)} \cdot \um)(h_{(2)}k_{(1)} \cdot \um ))\wtil^{-1} (h_{(3)},k_{(2)}). \label{globcocycle2}
\end{eqnarray}
\item If there is a convolution invertible linear map $\wtil$ as before, then $A$ admits a unital globalization. 
\end{enumerate}
\end{thm}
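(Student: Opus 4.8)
\textbf{Part (1): necessity of $\wtil$.} The plan is to extract $\wtil$ from a given globalization $(B,\varphi)$ with invertible cocycle $u:H\otimes H\to M(B)$. Identify $A$ with $\varphi(A)=\um B$, where $\um$ is the central idempotent witnessing that $\varphi(A)$ is a unital ideal of $B$. By Example~\ref{restriction}, the restricted partial cocycle is $\omega(h,k)=(h_{(1)}\cdot\um)u(h_{(2)},k_{(1)})(h_{(3)}k_{(2)}\cdot\um)$, which by the $L$-type manipulations available from (STPA2)--(STPA3) equals $(f_1*f_2)*\tilde\omega$ once we \emph{define} $\wtil(h,k):=\um\,(u(h,k)\,\um)=\um u(h,k)\um \in \um B=A$. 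The first task is to check $\wtil$ lands in $A$ (it does, since $u(h,k)$ is a multiplier and $\um u(h,k)\um\in\um B$) and that it is convolution invertible in $\Hom_\kappa(H\otimes H,A)$ with inverse $h\otimes k\mapsto \um u^{-1}(h,k)\um$; this uses that $u,u^{-1}$ are mutually inverse multipliers together with centrality of $\um$. Normalization $\wtil(1_H,h)=\wtil(h,1_H)=\epsilon(h)\um$ follows immediately from $u(1_H,h)=u(h,1_H)=1_{M(B)}$. Equation~\eqref{wtil} is obtained by compressing the multiplier-valued cocycle identity for $u$ (the third displayed equation in Example~\ref{restriction}) by $\um$ on both sides and using that $h\overline\triangleright x$ compressed by $\um$ reduces to the partial action $h\cdot(-)$ on $A$; the surviving $(h_{(1)}\cdot\um)$ factors are exactly what makes \eqref{wtil} a ``twisted'' version of the partial $2$-cocycle identity (TPA5). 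Finally \eqref{globcocycle1}--\eqref{globcocycle2} are just the expansions of $\omega=(f_1*f_2)*\wtil$, $\omega'=(f_1*f_2)*\wtil^{-1}$ using $f_1*f_2(h,k)=h\cdot(k\cdot\um)=(h_{(1)}\cdot\um)(h_{(2)}k_{(2)}\cdot\um)$ after re-indexing the Sweedler legs.

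\textbf{Part (2): sufficiency — constructing a unital globalization.} Given $\wtil$ satisfying \eqref{wtil} with the stated normalization, the plan is to first build a \emph{global twisted action} of $H$ on a large unital algebra and then carve out the piece that restricts to $A$. The natural candidate is the (twisted) smash-product-type algebra built on $\Hom_\kappa(H,A)$, or equivalently $A\otimes H$ with a suitably $\wtil$-twisted convolution. Concretely, I would let $B_0=\Hom_\kappa(H,A)$ (or a suitable sub/quotient), give it the product $(f\star g)(h)=f(h_{(1)})\big(h_{(2)}\cdot g(h_{(3)})\big)$ twisted by $\wtil$ in the pattern $(f\star g)(h)=f(h_{(1)})\,\big(h_{(2)}\cdot g(h_{(3)})\big)$ corrected by a $\wtil(h_{(2)},-)$ factor so that \eqref{wtil} is precisely the associativity constraint; this is the standard ``convolution algebra as globalization'' device used for partial Hopf actions in \cite{AB}, now in twisted form following \cite{DES2}. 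One defines $\varphi:A\to B_0$ by $\varphi(a)(h)=h\cdot a$ (the same formula as in \cite{AB}), checks with (PM1)--(PM3) and (TPA2) that $\varphi$ is an algebra monomorphism whose image is an ideal, with central idempotent $\um_B=\varphi(\um)$, i.e. $\um_B(h)=h\cdot\um$. The global twisting $u:H\otimes H\to M(B_0)$ is induced by $\wtil$ via left/right multiplication operators, and convolution-invertibility of $u$ as a multiplier follows from convolution-invertibility of $\wtil$. The verification that $(B_0,u)$ is a genuine global twisted $H$-module algebra — the measuring axiom, the cocycle identity for $u$ in $M(B_0)$, normalization — reduces, after applying the relevant operators to a test element, exactly to \eqref{wtil} and the normalization of $\wtil$. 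Unitality: replace $B_0$ by the subalgebra generated by $\varphi(A)$ together with the image of the global action (the analogue of $B=\sum_g\beta_g(\varphi(A))$), or simply adjoin a unit; the point of part (2) as stated is merely the \emph{existence} of a unital globalization, so one may take $B$ to be any unital algebra containing the constructed $B_0$ as an ideal and carrying the action, e.g. the unitalization, and check the action extends.

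\textbf{Main obstacle.} The routine parts are the measuring axioms and the monomorphism/ideal claims for $\varphi$, which are copied almost verbatim from \cite{AB}. The delicate point is pinning down the \emph{correct twisted convolution product} on $B_0$ and the \emph{correct formula for $u\in M(B_0)$} so that (i) associativity of $B_0$ is equivalent to \eqref{wtil} and not to some stronger unwanted identity, and (ii) the restriction of $(B_0,\cdot,u)$ back to the ideal $\varphi(A)$ reproduces \emph{exactly} the original pair $(\omega,\omega')$ via \eqref{globcocycle1}--\eqref{globcocycle2} — this is where the factors $f_1*f_2$ must reappear on their own, forced by (TPA3) and \eqref{Prop1Israel}, rather than being inserted by hand. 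Getting the Sweedler bookkeeping right in the $u$-cocycle identity inside $M(B_0)$ (which involves the induced action $\overline\triangleright$ on multipliers from Example~\ref{restriction}) will be the most technical step; I expect it to go through because \eqref{wtil} was engineered to be its compression to $A$, but the lift from $A$-valued to $M(B_0)$-valued identities requires care with the invertible antipode of $H$.
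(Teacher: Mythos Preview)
Your Part (1) is essentially the paper's argument: define $\wtil$ as the compression of the global cocycle $u$ by the central idempotent $\varphi(\um)$ and read off normalization, invertibility, \eqref{wtil}, and \eqref{globcocycle1}--\eqref{globcocycle2} from the corresponding properties of $u$. No issues there.

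Part (2) is where you diverge from the paper, and your route is both vaguer and harder than necessary. You propose to put a \emph{new, $\wtil$-twisted product} on $\Hom_\kappa(H,A)$ of the shape $(f\star g)(h)=f(h_{(1)})(h_{(2)}\cdot g(h_{(3)}))$ corrected by $\wtil$-factors, and then to realize $u$ as a multiplier of the resulting algebra. Two problems. First, with that product $\varphi(a)(h)=h\cdot a$ is not obviously multiplicative: already without any $\wtil$-correction one gets $(\varphi(a)\star\varphi(b))(h)=(h_{(1)}\cdot a)(h_{(2)}\cdot(h_{(3)}\cdot b))$, which is not $h\cdot(ab)$ for a genuinely partial action, so you would have to engineer the $\wtil$-twist to repair this simultaneously with associativity, and you give no formula. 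Second, pushing $u$ into $M(B_0)$ and invoking the induced action $\overline\triangleright$ is exactly the ``main obstacle'' you flag, and it forces you to assume an invertible antipode.

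The paper avoids all of this by keeping the \emph{ordinary convolution product} on $\Hom_\kappa(H,A)$ (so $\varphi$ is multiplicative for free by (PM2)) and instead placing the $\wtil$-twist into the $H$-\emph{measuring}:
\[
(h\rhd\theta)(k)=\wtil(k_{(1)},h_{(1)})\,\theta(k_{(2)}h_{(2)})\,\wtil^{-1}(k_{(3)},h_{(3)}).
\]
The global cocycle is then an honest element of the unital algebra $\Hom_\kappa(H,A)$, with the explicit formula
\[
u(h,k)(l)=\wtil(l_{(1)},h_{(1)})\,\wtil(l_{(2)}h_{(2)},k_{(1)})\,\wtil^{-1}(l_{(3)},h_{(3)}k_{(2)}),
\]
so no multipliers and no antipode are needed anywhere in the proof; the cocycle identity for $u$ reduces to telescoping $\wtil\,\wtil^{-1}$ pairs. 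The unital $B$ is simply the subalgebra of $\Hom_\kappa(H,A)$ generated by all $h\rhd\varphi(a)$ together with all $u^{\pm1}(h,k)$, and the ideal property of $\varphi(A)$ follows from four short identities of the type $\varphi(a)\ast(h\rhd\varphi(b))=\varphi(a(h\cdot b))$ and $\varphi(a)\ast u^{\pm1}(h,k)=\varphi(a\,\wtil^{\pm1}(h,k))$, the latter using \eqref{wtil} directly. In short: twist the action, not the product, and let $u$ live in $\Hom(H,A)$ rather than in its multipliers.
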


\begin{proof}

(1) Suppose that the pair $(B,\varphi )$ is a globalization of the twisted partial action of $H$ on $A.$ 
Then, there are a twisted action of $H$ on $B$ given by  $\rhd :H\otimes B\rightarrow B$ and a normalized, 
convolution invertible cocycle $u\in \mbox{Hom}(H\otimes H ,M(B))$, as well as    an algebra monomorphism $\varphi : A \rightarrow B $ such that $\varphi (A)$ is an ideal of $B$. The monomorphism $\varphi$ also intertwines the partial action of $H$ on $A$ with the induced partial action of $H$ on $\varphi (A)$, which is given by
\[
h\cdot \varphi (a) =\varphi (\um )(h\rhd \varphi (a)).
\]
and the induced partial cocycle is given by the expressions 
\begin{eqnarray*}
v(h,k) & = &   (h_{(1)} \cdot (k_{(1)} \cdot \varphi (\um )))u(h_{(2)},k_{(2)}) \nonumber \\
& = &  (h_{(1)} \cdot \varphi (\um )) u(h_{(2)},k_{(1)}) (h_{(3)}k_{(2)} \cdot \varphi ( \um )),   \\
v'(h,k) & = &  u^{-1}(h_{(1)},k_{(1)})(h_{(2)} \cdot (k_{(2)} \cdot \varphi (\um ))) \nonumber \\
& = &  (h_{(1)}k_{(1)} \cdot \varphi (\um ))u^{-1}(h_{(2)},k_{(2)})(h_{(3)} \cdot \varphi (\um )) .
\end{eqnarray*}
Therefore, we have $\varphi (h\cdot a)=h\cdot \varphi (a)$, $\varphi (\omega )=v$ and $\varphi (\omega' )=v'$. 

Define $\wtil \in \mbox{Hom}(H\otimes H ,A)$ such that
\[
\varphi (\wtil (h,k))=\varphi (\um )u(h,k).
\]
As the map $\varphi$ is a monomorphism, one can conclude that $\wtil$ is well defined. First, note that $\wtil$ is normalized, for
\[
\varphi (\wtil (h, 1_H)) =\varphi (\um )u(h,1_H)=\varphi (\um )\epsilon (h)= \varphi (\um \epsilon (h)) .
\]
By the injectivity of $\varphi$ we have $\wtil (h,1_H)=\epsilon (h)\um$. Similarly  $\wtil (1_H,h)=\epsilon (h)\varphi (\um )$. Also it is easy to see that $\wtil$ is convolution invertible, for  define $\wtil^{-1}$ by $\varphi (\wtil^{-1} )=\varphi (\um )u^{-1} (h,k)$, and then
\begin{eqnarray*}
\varphi ( \wtil ( h_{(1)} ,k_{(1)}) \wtil^{-1} (h_{(2)} ,k_{(2)})) &=& \varphi ( \wtil ( h_{(1)} ,k_{(1)}))\varphi ( \wtil^{-1} (h_{(2)} ,k_{(2)}))\\
&=& \varphi (\um )  u( h_{(1)} ,k_{(1)}) u^{-1} (h_{(2)} ,k_{(2)})\\
&=&  \varphi (\um ) \epsilon (h) \epsilon (k)\\
&=& \varphi (\um \epsilon (h) \epsilon (k)) .
\end{eqnarray*}
Again, by the injectivity of $\varphi$ we get 
\[
 \wtil ( h_{(1)} ,k_{(1)}) \wtil^{-1} (h_{(2)} ,k_{(2)}) =\um \epsilon (h) \epsilon (k) .
\]
The other inversion formula is obtained in a  similar way.

The modified cocycle expression (\ref{wtil}) is seen by
\begin{eqnarray*}
& & \varphi (  (h_{(1)} \cdot \wtil (k_{ (1) },l_{(1)}))\wtil (h_{(2)},k_{(2)}l_{(2)})) = \\
& =&  \varphi ( (h_{(1)} \cdot \wtil (k_{  (1)  },l_{(1)})))\varphi ( \wtil (h_{(2)},k_{(2)}l_{(2)})) \\
&=&   (h_{(1)} \cdot \varphi (\wtil (k_{   (1)    },l_{(1)})))\varphi ( \wtil (h_{(2)},k_{(2)}l_{(2)})) \\
&=&   (h_{(1)} \cdot (\varphi (\um ) u(k_{  (1)   },l_{(1)})))\varphi (\um )              u (h_{(2)},k_{(2)}l_{(2)}) \\
&=&   (h_{(1)} \cdot \varphi (\um )) (h_{(2)} \cdot u(k_{   (1)    },l_{(1)}))
\varphi (\um ) u (h_{(3)},k_{(2)}l_{(2)}) \\
&=&   (h_{(1)} \cdot \varphi (\um )) (h_{(2)} \overline{\rhd} u(k_{ (1)   },l_{(1)}))
u (h_{(3)},k_{(2)}l_{(2)}) \\
&\stackrel{(*)}{=} &   (h_{(1)} \cdot \varphi (\um )) u(h_{(2)},k_{(1)}) u(h_{(3)}k_{(2)} ,  l  ) \\
&=& (h_{(1)} \cdot \varphi (\um ))\varphi ( \wtil (h_{(2)},k_{(1)}) ) 
\varphi ( \wtil (h_{(3)}k_{(2)},  l    )) \\
&=& \varphi (  (h_{(1)} \cdot \um ) \wtil (h_{(2)},k_{(1)})  
\wtil (h_{(3)}k_{(2)}, l     )) , \\
\end{eqnarray*}
where in the equality (*) above we used the fact that the cocycle equation is an identity between multipliers.

Finally, for the expressions (\ref{globcocycle1}) and (\ref{globcocycle2}) we have
\begin{eqnarray*}
&& \varphi ( (h_{(1)} \cdot (k_{(1)} \cdot \um ))\wtil (h_{(2)} ,k_{(2)})) \\
& =&  (h_{(1)} \cdot (k_{(1)} \cdot \varphi (\um )))\varphi (\um ) u(h_{(2)} ,k_{(2)}) \\
& =&  (h_{(1)} \cdot (k_{(1)} \cdot \varphi (\um ))) u(h_{(2)} ,k_{(2)}) \\
&=& v(h,k) =\varphi (\omega (h,k)),
\end{eqnarray*}
which gives
\[
\omega (h,k)= (h_{(1)} \cdot (k_{(1)} \cdot \um ))\wtil (h_{(2)} ,k_{(2)}) .
\]
The other equalities are obtained similarly.

(2) 
Consider the map 
\begin{eqnarray*}
\varphi: A & \rightarrow & \Hom (H,A) \\
a & \mapsto & \varphi(a): h \mapsto h \cdot a .
\end{eqnarray*}
From (\ref{productpartial}) one can easily see that $\varphi$ is an algebra homomorphism. In addition, if $a\in \mbox{Ker}(\varphi )$, then 
\[
a=1_H \cdot a =\varphi (a)(1_H )=0,
\]
therefore, $\varphi$ is an algebra monomorphism.

We begin by proving that the algebra $\Hom(H,A)$ is a twisted $H$-module algebra, and then we show that there is a subalgebra $B$ of $\Hom(H,A)$, which inherits the structure of the twisted $H$-module and contains $\varphi(A)$ as an ideal.

Let us prove that $\Hom (H,A)$ is a twisted $H$-module algebra. Firstly,  
$H$ measures $\Hom (H,A)$ by means of  
\[
(h \rhd \theta)(k) =  \wtil (k_{(1)},h_{(1)}) \theta(k_{(2)}h_{(2)}) \wtil^{-1}(k_{(3)},h_{(3)}).
\]
In fact, given $h,k \in H$ and $\theta, \phi \in \Hom(H,A)$, we have 
\begin{eqnarray*}
&&(h \rhd (\theta * \phi))(k) = \\
& = &  \wtil (k_{(1)},h_{(1)})(\theta * \phi)(k_{(2)}h_{(2)}) \wtil^{-1}(k_{(3)},h_{(3)})\\
& = &  \wtil (k_{(1)},h_{(1)})\theta (k_{(2)}h_{(2)}) \phi (k_{(3)}h_{(3)}) \wtil^{-1}(k_{(4)},h_{(4)})\\
& = &  \wtil (k_{(1)},h_{(1)})\theta (k_{(2)}h_{(2)}) \epsilon (k_{(3)}h_{(3)} ) \phi (k_{(4)}h_{(4)}) \wtil^{-1}(k_{(5)},h_{(5)})\\
& = &  [\wtil (k_{(1)},h_{(1)})\theta (k_{(2)}h_{(2)}) \wtil^{-1}(k_{(3)},h_{(3)} )]  
\times \\ 
& & \times [ \wtil(k_{(4)},h_{(4)}) \phi (k_{(5)}h_{(5)}) \wtil^{-1}(k_{(6)},h_{(6)})]\\
& = &  (h_{(1)} \rhd \theta)(k_{(1)}) (h_{(2)} \rhd \phi)(k_{(2)}) \\
& = &  (h_{(1)} \rhd \theta)*(h_{(2)} \rhd \phi)(k). 
\end{eqnarray*}

It is also easy to see that $1_H \rhd \theta =\theta$, for all 
$\theta \in \mbox{Hom}(H,A)$. Indeed, 
\begin{eqnarray*}
& & ( 1_H \cdot \theta )(h)= \wtil (h_{(1)} ,1_H ) \theta ( h_{(2)} 1_H ) \wtil^{-1} (h_{(3)} ,1_H ) \\
&=&  \epsilon (h_{(1)} ) \theta (h_{(2)} )\epsilon (h_{(3)}) =\theta (h).
\end{eqnarray*}

The map $\eta(k) = \epsilon(k) \um$ is the unit of $\Hom(H,A)$, and one needs to verify that $(h \rhd \eta)(k) = \epsilon (h) \eta (k)$. 
\begin{eqnarray*}
(h \rhd \eta)(k) & = & \wtil(k_{(1)},h_{(1)}) \eta( k_{(2)}h_{(2)})
\wtil^{-1}(k_{(3)},h_{(3)}) =\\
&=& \wtil(k_{(1)},h_{(1)}) \epsilon( k_{(2)}) \epsilon (h_{(2)})
\wtil^{-1}(k_{(3)},h_{(3)}) =\\
&=& \wtil( k_{(1)}, h_{(1)}) \wtil^{-1}(k_{(2)}, h_{(2)}) =\\
& = & \epsilon(h) \epsilon(k) \um = \epsilon (h) \eta(k).
\end{eqnarray*}

The twisted $H$-module structure appears when one expands the expression of $(h \rhd (k \rhd \theta))$:
\begin{eqnarray*}
& & (h \rhd (k \rhd \theta))(l) 
=  \wtil(l_{(1)},h_{(1)})\wtil(l_{(2)}h_{(2)},k_{(1)})\theta (l_{(3)}h_{(3)}k_{(2)})\wtil^{-1} (l_{(4)}h_{(4)},k_{(3)})  \\
&= &  \wtil(l_{(1)},h_{(1)})\wtil(l_{(2)}h_{(2)},k_{(1)}) 
\underbrace{ \epsilon(l_{(3)})\epsilon(h_{(3)}) \epsilon(k_{(2)})}
\theta (l_{(4)}h_{(4)}k_{(3)})
\underbrace{\epsilon(l_{(5)})\epsilon(h_{(5)}) \epsilon(k_{(4)}) }\times \\
& & \wtil^{-1} (l_{(6)}h_{(6)},k_{(5)}) \wtil^{-1}(l_{(7)},h_{(7)}) \\
&= &  \wtil(l_{(1)},h_{(1)})\wtil(l_{(2)}h_{(2)},k_{(1)})
\underbrace{\wtil^{-1}(l_{(3)},h_{(3)}k_{(2)})]   \wtil(l_{(4)},h_{(4)}k_{(3)})} 
 \theta (l_{(5)}h_{(5)}k_{(4)}) \times \\
&& \times 
\underbrace{ \wtil^{-1}(l_{(6)},h_{(6)}k_{(5)}) \wtil(l_{(7)},h_{(7)}k_{(6)})}
\wtil^{-1}(l_{(8)}h_{(8)},k_{(7)}) \wtil^{-1}(l_{(9)},h_{(9)}) \\
&= &  [\wtil(l_{(1)},h_{(1)})\wtil(l_{(2)}h_{(2)},k_{(1)})\wtil^{-1}(l_{(3)},h_{(3)}k_{(2)})] \times \\ 
& & \times \wtil(l_{(4)},h_{(4)}k_{(3)}) \theta (l_{(5)}h_{(5)}k_{(4)}) \wtil^{-1}(l_{(6)},h_{(6)}k_{(5)}) \times \\
& & \times [\wtil(l_{(7)},h_{(7)}k_{(6)}) \wtil^{-1}(l_{(8)}h_{(8)},k_{(7)}) \wtil^{-1}(l_{(9)},h_{(9)})] \\
&= &  [\wtil(l_{(1)},h_{(1)})\wtil(l_{(2)}h_{(2)},k_{(1)})\wtil^{-1}(l_{(3)},h_{(3)}k_{(2)})] (h_{(4)}k_{(3)} \rhd \theta (l_{(4)})) \\ 
&  &    [\wtil(l_{(5)},h_{(5)}k_{(4)}) \wtil^{-1}(l_{(6)}h_{(6)},k_{(5)}) \wtil^{-1}(l_{(7)},h_{(7)})] . \\
\end{eqnarray*}
If we define $u : H \otimes H \rightarrow \Hom(H,A)$ as the map 
\begin{equation}\label{def.u.globalization}
u(h,k)(l)=  \wtil(l_{(1)},h_{(1)}) \wtil(l_{(2)}h_{(2)},k_{ (1)}) \wtil^{-1}(l_{(3)},h_{(3)}k_{(2)}), 
\end{equation}
then its convolution inverse is 
\begin{equation} \label{u.inverse.globalization}
 u^{-1}(h,k)(l) =   \wtil(l_{(1)},h_{(1)}k_{(1)})\wtil^{-1}(l_{(2)}h_{(2)},k_{(2)})\wtil^{-1}(l_{(3)},h_{(3)}),
\end{equation}
and  we have 
\begin{equation}
\label{lawofcomposition}
(h \rhd (k \rhd \theta ))(l) =   u(h_{(1)},k_{(1)}) * (h_{(2)}k_{(2)} \rhd \theta ) * u^{-1} (h_{(3)},k_{(3)}) (l). 
\end{equation}

The map $u$ is truly a cocycle, i.e., it satisfies the cocycle identity
\begin{equation}
\label{lawofcocycles}
 (h_{(1)} \rhd u(k_{(1)},l_{(1)}))*u(h_{(2)},k_{(2)}l_{(2)})   =   u(h_{(1)},k_{(1)}) * u(h_{(2)}k_{(2)},l).
\end{equation}
In fact, beginning on the left hand side  and using repeatedly the definition of $u$, we get
\begin{eqnarray*}
& &  (h_{(1)} \rhd u(k_{(1)},l_{(1)}))*u(h_{(2)},k_{(2)}l_{(2)})(m) = \\
& = &  (h_{(1)} \rhd u(k_{(1)},l_{(1)}))(m_{(1)}) u(h_{(2)},k_{(2)}l_{(2)})(m_{(2)}) 
\\ 
& = &   
[\wtil(m_{(1)},h_{(1)}) u(k_{(1)},l_{(1)})(m_{(2)}h_{(2)}) \wtil^{-1}(m_{(3)},h_{(3)})] 
u(h_{  (4)  },k_{(2)}l_{(2)})(m_{(4)}) \\ 
& = &   \wtil(m_{(1)},h_{(1)}) u(k_{(1)},l_{(1)})(m_{(2)}h_{(2)}) \underbrace{\wtil^{-1}(m_{(3)},h_{(3)}) \wtil(m_{(4)},h_{(4)})} \times \\
& & \times 
\wtil(m_{(5)}h_{(5)},k_{(2)}l_{(2)}) \wtil^{-1}(m_{(6)},h_{(6)}k_{(3)}l_{(3)})\\ 
& = &   \wtil(m_{(1)},h_{(1)}) u(k_{(1)},l_{(1)})(m_{(2)}h_{(2)}) \wtil(m_{(3)}h_{(3)},k_{(2)}l_{(2)}) 
\wtil^{-1}(m_{(4)},h_{(4)}k_{(3)}l_{(3)})\\ 
& = &   \wtil(m_{(1)},h_{(1)}) \wtil(m_{(2)}h_{(2)},k_{(1)}) \wtil(m_{(3)}h_{(3)}k_{(2)},l_{(1)})
\times \\
& &  \times \underbrace{ \wtil^{-1}(m_{(4)}h_{(4)},k_{(3)}l_{(2)}) \wtil(m_{(5)}h_{(5)},k_{(4)}l_{(3)})} 
 \wtil^{-1}(m_{(6)},h_{(6)}k_{(5)}l_{(4)})\\ 
& = &  \wtil(m_{(1)},h_{(1)}) \wtil(m_{(2)}h_{(2)}, k_{(1)}) \wtil (m_{(3)}h_{(3)}k_{(2)},l_{(1)}) 
\wtil^{-1}(m_{(4)},h_{(4)}k_{ (3)  }l_{(2)})\\
& = &  \wtil(m_{(1)},h_{(1)}) \wtil(m_{(2)}h_{(2)}, k_{(1)}) \epsilon(m_{(3)}h_{(3)}k_{ (2)  }) \times \\
& & \times \wtil (m_{(4)}h_{(4)}k_{(3)},l_{(1)}) 
\wtil^{-1}(m_{(5)},h_{(5)}k_{(4)}l_{(2)})\\
& = &  [ \wtil(m_{(1)},h_{(1)}) \wtil(m_{(2)}h_{(2)}, k_{(1)}) \wtil^{-1}(m_{(3)},h_{(3)}k_{(2)})] \times \\
& & \times 
[\wtil(m_{(4)},h_{(4)}k_{(3)}) \wtil (m_{(5)}h_{(5)}k_{(4)},l_{(1)}) \wtil^{-1}(m_{(6)},h_{(6)}k_{(5)}l_{(2)})]\\
& = &   u(h_{(1)},k_{(1)}) (m_{(1)}) u(h_{(2)}k_{(2)},l)(m_{(2)})\\
& = &  u(h_{(1)},k_{(1)}) *u(h_{(2)}k_{(2)},l)(m).
\end{eqnarray*}

Finally, it is easily checked that $u$ is a normalized 2-cocycle:
if  $\eta $ denotes the unit element of $\Hom(H,A)$ then  
\begin{equation} \label{equation.u.is.normalized} \nonumber
u(h,1_H)=u(1_H,h) = \epsilon(h) \eta. 
\end{equation}

We will show now that $\varphi(A)$ is an ideal in a twisted $H$-module subalgebra of $\Hom(H,A)$.
Define $B$ as the subalgebra of $\Hom (H,A)$ generated by the elements of the form 
$h\triangleright \varphi (a)$, for all $h\in H$ and $a\in A$, 
and by the functions $u^{\pm 1} (h,k)$, for all $h,k\in H$. Since $u(1_H,1_H) (h) = \epsilon(1_H) \eta(h) = \eta(h)$, $B$ is a unital subalgebra of $\Hom(H,A)$. 
 
It is easy to see that $H\triangleright B \subseteq B$, because of the law of composition
(\ref{lawofcomposition}) and the cocycle identity (\ref{lawofcocycles}). Therefore, 
$B$ is a twisted $H$-module subalgebra of $\Hom (H,A)$. 
One needs only to verify that $\varphi (A) $ is an ideal in $ B.$ This is accomplished by showing the following identities:
\begin{enumerate}
\item[(i)] $\varphi (a) * (h\triangleright \varphi (b)) =\varphi (a(h\cdot b))$.
\item[(ii)] $(h\triangleright \varphi (b)) * \varphi (a) =\varphi ((h\cdot b)a)$.
\item[(iii)] $\varphi (a) * u^{\pm 1}(h,k) =\varphi (a \wtil^{\pm 1} (h,k))$.
\item[(iv)] $ u^{\pm 1}(h,k) *\varphi (a) =\varphi (\wtil^{\pm 1} (h,k) a)$.
\end{enumerate}

For identity (i) we have, for $a,b\in A$ and $h\in H$
\begin{eqnarray*}
& & \varphi (a) * (h \rhd \varphi (b)) (k) = \\
& = &  (k_{(1)} \cdot a) \wtil (k_{(2)},h_{(1)}) (k_{(3)} h_{(2)} \cdot b) 
\wtil^{-1}(k_{(4)},h_{(3)}) \\
& = &  (k_{(1)} \cdot a) (k_{(2)} \cdot \um ) \wtil (k_{(3)},h_{(1)}) 
(k_{(4)} h_{(2)} \cdot \um )(k_{(5)} h_{(3)} \cdot b) \times \nonumber\\
& & \times (k_{(6)} h_{(4)} \cdot \um ) \wtil^{-1}(k_{(7)},h_{(5)}) (k_{(8)} \cdot \um ),
\end{eqnarray*}
where the last factor appears because $e(k) = (k \cdot \um)$ commutes convolutionally. 
Since $f_1$ and $f_2$ are also central in $\Hom(H \otimes H, A)$ (by (STPA2) ) and  $\omega = f_1 * f_2 * \wtil$, the latter  expression can be rewritten as 
\begin{eqnarray*}
& = &  (k_{(1)} \cdot a) \omega (k_{(2)},h_{(1)}) (k_{(3)} h_{(2)} \cdot b) 
\omega '(k_{(4)},h_{(3)}) =\\
& = &  (k_{(1)} \cdot a) (k_{(2)} \cdot (h\cdot b))=\\
& = & k\cdot (a (h\cdot b) =\varphi (a(h\cdot b))(k). 
\end{eqnarray*}
Identity (ii) is obtained in a similar manner.

For  identity (iii), we have, for $a\in A$ and $h,k,l \in H$
\begin{eqnarray*}
& & \varphi (a)* u(h,k)(l) =  \varphi (a)(l_{(1)}) u(h,k)(l_{(2)}) =\\
& = &  (l_{(1)} \cdot a)\wtil (l_{(2)} , h_{(1)}) \wtil (l_{(3)} h_{(2)} ,k_{(1)}) \wtil^{-1} (l_{(4)}, h_{(3)}k_{(2)}) =\\
& = &  (l_{(1)} \cdot a) (l_{(2)} \cdot \um ) \wtil (l_{(3)} , h_{(1)}) \wtil (l_{(4)} h_{(2)} ,k_{(1)}) \wtil^{-1} (l_{(5)}, h_{(3)}k_{(2)}) =\\
& = &  (l_{(1)} \cdot a) (l_{(2)} \cdot \wtil (h_{(1)} , k_{(1)})) \wtil (l_{(3)} , h_{(2)} k_{(2)}) \wtil^{-1} (l_{(4)}, h_{(3)}k_{(3)}) =\\
& = &  (l_{(1)} \cdot a) (l_{(2)} \cdot \wtil (h , k))=\\
& = & l\cdot (a\wtil (h,k)) =\varphi (a\wtil (h,k))(l).
\end{eqnarray*}
In a similar way, we obtain the expression for $u^{-1} (h,k)$ and the identities (iv) as well. Therefore $\varphi (A) \trianglelefteq B$.

From  identity (i) we conclude quickly that the map $\varphi$ intertwines the partial action on $A$ with the partial action on $\varphi (A)$, indeed
\[
h\cdot \varphi (a) =\varphi (\um ) *(h\triangleright \varphi (a))=\varphi (h\cdot a) .
\]

The image of the partial cocycle $\varphi(\omega(h,k))$ is the induced cocycle 
\[
v(h,k) =  (h_{(1)} \cdot \varphi(\um))*u(h_{(2)},k_{(1)})*(h_{(3)}k_{(2)} \cdot \varphi(\um))
\]
for the twisted partial action of $H$ on the ideal $\varphi(A)$. In fact, 
\begin{eqnarray*}
& & \varphi( \omega (h,k))(l) = l \cdot \omega (h,k)  \\ 
& = &   l \cdot [(h_{(1)} \cdot \um)\omega(h_{(2)},k_{(1)})(h_{(3)}k_{(2)} \cdot \um)] \\
& = &  (l_{(1)} \cdot (h_{(1)} \cdot \um))(l_{(2)} \cdot \omega(h_{(2)},k_{(1)}))(l_{(3)} \cdot (h_{(3)}k_{(2)} \cdot \um)) \\
& = &  (l_{(1)} \cdot (h_{(1)} \cdot \um))(l_{(2)} \cdot [(h_{(2)} \cdot \um)\wtil(h_{(3)},k_{(1)})
(h_{(4)} k_{(2)} \cdot \um)]) \times \\
& & \times (l_{(3)} \cdot (h_{(5)}k_{(3)} \cdot \um)) \\
& = &  (l_{(1)} \cdot (h_{(1)} \cdot \um))(l_{(2)} \cdot \wtil(h_{(2)},k_{(1)}))(l_{(3)} \cdot (h_{(3)}k_{(2)} \cdot \um)).
 \end{eqnarray*}
Using equality (\ref{wtil}) for $\wtil$, we obtain
 \begin{eqnarray*}
&& \varphi(\omega(h,k))(l) =  l \cdot \omega(h,k)  = \\
& = &   (l_{(1)} \cdot (h_{(1)} \cdot \um))\wtil(l_{(2)},h_{(2)})\wtil(l_{(3)}h_{(3)},k_{(1)}) \times \\
& & \times(\wtil^{-1}(l_{(4)},h_{(4)}k_{(2)})) 
 (l_{(5)} \cdot (h_{(5)}k_{(3)} \cdot \um)) \\
& = &  (l_{(1)} \cdot (h_{(1)} \cdot \um )) u(h_{(2)},k_{(1)})(l_{(2)})
(l_{(3)} \cdot (h_{(3)}k_{(2)} \cdot \um)) \\
& = &  \varphi(h_{(1)} \cdot \um) *u(h_{(2)},k_{(1)})*\varphi(h_{(3)}k_{(2)} \cdot \um) (l)\\
& = &  (h_{(1)} \cdot \varphi(\um))*u(h_{(2)},k_{(1)})*(h_{(3)}k_{(2)} \cdot \varphi(\um)) (l) \\
& = & v(h,k) (l). 
\end{eqnarray*}

In an analogous manner, it can be shown that
\begin{eqnarray*}
\varphi(\omega'(h,k)) & = &  (h_{(1)}k_{(1)} \cdot \varphi(\um))*u^{-1}(h_{(2)},k_{(2)})*(h_{(3)} \cdot \varphi(\um)) \\
& = & v' (h,k )(l) .
\end{eqnarray*}
This concludes the proof.
\end{proof} 

For the case of $H$ being a co-commutative Hopf algebra, the $H$-submodule $H\triangleright \varphi (A)$ is a
non-unital subalgebra of $\Hom(H,A)$ and it carries a globalization of the twisted partial action on $A$.

This is because we can write, for $a, b\in A$ and $h,k \in H$, using the co-commutativity of $H$, the product $(h\triangleright \varphi (a))\ast (k\triangleright \varphi (b))$ as a linear combination in $H\triangleright \varphi (A)$. Indeed,
\begin{eqnarray*}
& \, & (h\triangleright \varphi (a))\ast (k\triangleright \varphi (b)) =(h_{(1)}\triangleright \varphi (a))\ast (h_{(2)}S(h_{(3)})\triangleright (k\triangleright \varphi (b))) \\
& = & (h_{(1)}\triangleright \varphi (a))\ast 
u^{-1} (h_{(2)}, S(h_{(7)})) \ast (h_{(3)} \triangleright ( S(h_{(6)})\triangleright 
(k\triangleright \varphi (b))))\ast u(h_{(4)}, S(h_{(5)}))
\\
& = & (h_{(1)}\triangleright \varphi (a))\ast 
u^{-1} (h_{(2)}, S(h_{(9)})) \ast (h_{(3)} \triangleright ( S(h_{(8)})\triangleright 
(k\triangleright \varphi (b))))\ast (u(h_{(4)}, S(h_{(7)})) \ast \\
&& \ast u(h_{(5)} S(h_{(6)}) , h_{(10)}) 
\\
& = & (h_{(1)}\triangleright \varphi (a))\ast 
u^{-1} (h_{(2)}, S(h_{(9)})) \ast (h_{(3)} \triangleright ( S(h_{(8)})\triangleright 
(k\triangleright \varphi (b))))\ast (h_{(4)} \triangleright u(S(h_{(7)}), h_{(10)})) \ast 
\\
&& \ast u(h_{(5)} , S(h_{(6)}) h_{(11)}) \\
& = & (h_{(1)}\triangleright \varphi (a))\ast 
u^{-1} (h_{(2)}, S(h_{(10)})) \ast (h_{(3)} \triangleright ( S(h_{(9)})\triangleright 
(k\triangleright \varphi (b))))\ast (h_{(4)} \triangleright u(S(h_{(8)}), h_{(11)})) \ast 
\\
&& \ast u(h_{(5)} , S(h_{(6)}) h_{(7)}) \\
& = & (h_{(1)}\triangleright \varphi (a))\ast 
u^{-1} (h_{(2)}, S(h_{(7)})) \ast (h_{(3)} \triangleright ( S(h_{(6)})\triangleright 
(k\triangleright \varphi (b))))\ast (h_{(4)} \triangleright u(S(h_{(5)}), h_{(8)}))   \\
& = & (h_{(1)}\triangleright \varphi (a))\ast 
u^{-1} (h_{(2)} S(h_{(3)}), h_{(4)}) \ast u^{-1} (h_{(5)}, S(h_{(9)})) \ast (h_{(6)} \triangleright ( S(h_{(6)})\triangleright 
(k\triangleright \varphi (b))))\ast 
\\
&& \ast (h_{(7)} \triangleright u(S(h_{(8)}), h_{(10)}))   \\
& = & (h_{(1)}\triangleright \varphi (a))\ast 
u^{-1} (h_{(2)} S(h_{(5)}), h_{(6)}) \ast u^{-1} (h_{(3)}, S(h_{(4)})) \ast (h_{(7)} \triangleright ( S(h_{(10)})\triangleright 
(k\triangleright \varphi (b))))\ast 
\\
&& \ast (h_{(8)} \triangleright u(S(h_{(9)}), h_{(11)}))   \\
& = & (h_{(1)}\triangleright \varphi (a))\ast 
u^{-1} (h_{(2)} , S(h_{(5)}) h_{(6)}) \ast (h_{(3)} \triangleright u^{-1} (S(h_{(4)}), h_{(7)})) \ast (h_{(8)} \triangleright ( S(h_{(11)})\triangleright 
(k\triangleright \varphi (b))))\ast 
\\
&& \ast (h_{(9)} \triangleright u(S(h_{(10)}), h_{(12)}))   \\
& = & (h_{(1)}\triangleright \varphi (a))\ast 
(h_{(2)} \triangleright u^{-1} (S(h_{(6)}), h_{(7)})) \ast 
(h_{(3)} \triangleright ( S(h_{(5)})\triangleright (k\triangleright \varphi (b))))
\ast (h_{(3)} \triangleright u(S(h_{(4)}), h_{(8)}))   \\
& = & h_{(1)}\triangleright (\varphi (a)\ast 
 u^{-1} (S(h_{(4)}), h_{(5)}) \ast 
(S(h_{(3)})\triangleright (k\triangleright \varphi (b)))
\ast  u(S(h_{(2)}), h_{(6)})) ,  
\end{eqnarray*}
and this last expression lies in $H\triangleright \varphi (A)$, because $\varphi (A)$ is an ideal of the algebra $B$ obtained in the previous theorem.

And, again only for co-commutative Hopf algebras, given any $h,k,l\in H$, the following expression in $B$
\[
\theta (h,k,l)=  u^{-1} (S(h_{(3)}), h_{(4)}) *(S(h_{(2)})\triangleright u(k,l)) * 
u(S(h_{(1)}), h_{(5)}) 
\]
satisfies the relation
\[
h_{(1)} \triangleright \theta (h_{(2)} ,k,l)=\epsilon (h) u(k,l).
\]
Then, since $ \theta (h,k,l) \in B$ and $ \varphi (A)$ is an ideal in $B,$ we have,  for any $a\in A$, and $h,k,l\in H,$ that
\[
\varphi (a) * \theta (h,k,l) \in \varphi (A),
\]
which implies that
\begin{eqnarray*}
h_{(1)}\triangleright (\varphi (a) * \theta (h_{(2)}, k,l)) & = &(h_{(1)}\triangleright \varphi (a)) * (h_{(2)}\triangleright \theta (h_{(3)},k,l) )  =\\
& = & (h_{(1)}\triangleright \varphi (a)) *\epsilon (h_{(2)}) u(k,l) =\\
& = & (h\triangleright \varphi (a)) *u(k,l).
\end{eqnarray*}
That is, the product 
$(h\triangleright \varphi (a))*u(k,l)\in H\triangleright \varphi (A)$. Analogously, we obtain 
\[
 h_{(1)}\triangleright ( \theta (h_{(2)}, k,l) * \varphi (a)) = 
  u(k,l)*(h\triangleright \varphi (a)).
\]
Note that $H \triangleright \varphi(A)$ is a non-unital algebra. The above computation shows that  
for any $k,l$ in $H$, the elements $u^{\pm 1}(k,l)$ can be viewed as multipliers of the algebra $ H \triangleright \varphi(A)$ since
any product of elements of the form $h\triangleright \varphi (a)$ and $u(k,l)$ can be written as an element of $H\triangleright \varphi (A)$.
This is the case for globalization of twisted partial group actions as shown in \cite{DES2}. 
In this case, for each $g\in G$, the algebra which carries the globalization is $B= \sum_{g \in G} \beta_g (\varphi(A))$, the subspaces
$\beta_g(\varphi (A))$ are ideals of this algebra, 
and the 2-cocycle components $u_{g,h}$ belong to the multiplier algebra of $B$.

\begin{ex} \label{exemplo:grupos_algebricos}\cite{ABDP} Let $\kappa $ be an isomorphic copy of the complex numbers $\Cc $ and let $\mathbb{S}^1  \subseteq \Cc$ be the circle group, i.e. the group of all complex roots of $1$.  Let, furthermore, $G$ be an arbitrary finite group seen as a subgroup of $S_n$ for some $n$. Taking the action of  $G \subseteq S_n$  on $ (\ka \mathbb{S}^1 )^{\otimes n} $  by permutation of roots,  consider  the smash product Hopf algebra 
\[
H'_1 = (\ka \mathbb{S}^1 )^{\otimes n} \rtimes \ka  G, 
\]
which is co-commutative. Let $X\subseteq G$ be an arbitrary subset which is not a subgroup, and consider the subalgebra $\tilde {A} =  (\sum _{g\in X} p_g) (\ka  G)^* \subseteq  (\ka  G)^*,$ and define the commutative algebra $A'=  \ka [t,t^{-1}]^{\otimes n} \otimes \tilde{A}$. 

In order to  simplify the notation, write 
\begin{equation}\label{notation1}
t_i = 1 \otimes \ldots \otimes 1 \otimes t \otimes 1\otimes \ldots \otimes 1,
\end{equation} 
where $t$ belongs to the $i$-copy of $\ka [ t, t\m]$, then, we have the elementary monomials in $\ka [ t, t\m]$, given by
\[
t_1^{k_1}\ldots t_n^{k_n} = t^{k_1} \otimes \ldots \otimes t^{k_n} .
\]
In its turn, the generators of $(\ka \mathbb{S}^1 )^{\otimes n}$ can be written in terms of the roots  $\chi_{\theta}$ of unity in the following way
\begin{equation}\label{notation2}  
\chi_{\theta_1 , \ldots \theta_n } = \chi_{\theta_1} \otimes \ldots \otimes \chi_{\theta_n } \in (\ka \mathbb{S}^1 )^{\otimes n},
\end{equation} 
where $\chi_{\theta_i} \in \mathbb{S}^1 $ is the root of $1$ whose angular coordinate is $\theta _i $ and which belongs to the  $i$-factor of $(\ka \mathbb{S}^1 )^{\otimes n}$.

Then with the notation established in (\ref{notation1}) and (\ref{notation2}), the formula 

$$
(\chi_{\theta_1 , \ldots \theta_n } \otimes u_g) \cdot   (t_1^{k_1} \ldots t_n^{k_n} \otimes p_s )  =    
{ \left \{  \begin{array}{ll}
\exp\{ i \sum_{j=1}^{n}k_j \theta_{  g s \m  (j)}  \}  \; t_1^{k_1} \ldots t_n^{k_n} \otimes p_{ s g \m    },
 & \mbox{\rm if \ } s\m g \in X, \\
0 & \mbox{\rm if \ } s\m g \not\in X,
\end{array}  \right. }
$$ where $g \in G$ and $s \in X \subseteq G,$   gives a left partial action  $\cdot : H'_1 \times A' \to A' $. As $H'_1 $ is a co-commutative Hopf algebra and $A'$ is a commutative algebra, then thanks to Remark~\ref{rem:co-commutativeCase} it is enough to give a partial $2$-cocycle $\omega $ attached to our partial action, which is invertible in the ideal $\langle f_1 \ast f_2\rangle ,$ i.e. a $\kappa $-linear map   $\omega:H\otimes H \to A$ satisfying  conditions (TPA3)-(TPA5) and (STPA3).

 Assume   that  $G$ is such that its  Schur Multiplier  is non-trivial, and take a normalized $2$-cocycle $\gamma : G \times G \to {\ka }^{\ast}$ which is not a coboundary (for a concrete such $\gamma $ when $G$ is the Klein four-group see \cite{ABDP}). For arbitrary  
$h= \chi_{\theta_1 , \ldots \theta_n } \otimes u_g$ and $l= \chi_{{\theta}'_1 , \ldots {\theta }'_n } \otimes u_s$ in $H'_1$  set
\begin{equation*}
\omega ( h, l ) = \gamma (g,s)\;  ( h \cdot ( l \cdot {\mathbf{1}_{A'} } )).
\end{equation*}   
Then, as it was explained in \cite{ABDP}, the pair $(\alpha , \omega )$ forms a twisted partial action of 
$H'_1 = (\ka \mathbb{S}^1 )^{\otimes n} \rtimes \ka  G$ on $A'$. Moreover, taking 
\begin{equation*}
\omega '( h, l ) = \gamma (g,s)\m \;  ( h \cdot ( l \cdot {\mathbf{1}_{A'} } )),
\end{equation*}   we readily see that it is symmetric.

We   observe now that this symmetric twisted partial action is globalizable.  Indeed, the auxiliary convolution invertible map $\wtil :H'_1 \otimes H'_1 \rightarrow A'$ can be given by
\begin{equation*}
\wtil ( h, l ) = \gamma (g,s) \epsilon (h) \epsilon (l),
\end{equation*} 
where $h= \chi_{\theta_1 , \ldots \theta_n } \otimes u_g$ and $l= \chi_{{\theta}'_1 , \ldots {\theta }'_n } \otimes u_s$.  Direct verifications show  that the convolution inverse of $\wtil $ is 
\begin{equation*}
\wtil^{-1} ( h, l ) = \gamma (g,s)^{-1} \epsilon (h) \epsilon (l),
\end{equation*} and $\wtil ,$ $\wtil \m$ satisfy all conditions of Theorem~\ref{teorema:globalizacao}. 
\end{ex}

{ 

\begin{ex} Consider now  the symmetric twisted partial action given in Example \ref{exemplo:klein4}, where $H=(\kappa K_4)^*$, the dual of the group algebra of the  Klein four-group $K_4 =\langle a,b \; | \; a^2 =b^2 =e \rangle$, and $A$ is the base field $\kappa$, defined by the measuring map $\lambda (p_e ) =\lambda (p_a )=\frac{1}{2}$ and $\lambda (p_b )=\lambda (p_{ab} )=0$. As it was shown in this case,  the partial 2-cocyle $\omega $ and its partial inverse ${\omega}'$ are fully determined by a pair $(x,y)\in \kappa^2$ satisfying the quadratic equation 
(\ref{points}) 
where   $x=\omega (p_e ,p_e )$ and    $y=\omega' (p_e ,p_e )$.  One readily notes that the pairs $(\frac{1}{4}, \frac{1}{4}),$  $(\frac{1}{8}, \frac{1}{8})$ are  solutions for (\ref{points}). Obviously, the first one gives  the trivial partial $2$-cocycle 
$\omega (p_g, p_h) = \omega ' (p_g, p_h) = f_1 \ast f_2 \; (p_g, p_h) =p_g \cdot (p_h \cdot 1).$ So we take the second one, and then we have  
\[
x = \omega (p_e ,p_e )=\omega (p_a , p_e ) =\omega (p_e , p_a ) =\omega (p_a , p_a )  =  \frac{1}{8}
\]
and
\begin{eqnarray*}
&\, & \omega (p_e ,p_b )=\omega (p_a , p_b ) =\omega (p_e , p_{ab} ) =\omega (p_a , p_{ab} )  \\
& = & \omega (p_b ,p_e )=\omega (p_{ab} , p_e ) =\omega (p_b , p_a ) =\omega (p_{ab} , p_a )  \\
& = & -\omega (p_b ,p_b )=-\omega (p_{ab} , p_b ) =-\omega (p_b , p_{ab} ) =-\omega (p_{ab} , p_{ab} ) \\
& = & - \frac{1}{8},
\end{eqnarray*} and $\omega = \omega ' .$

 We shall prove that  this symmetric twisted partial action is globalizable. 
In order to  consider the auxiliary map $\wtil$, let us denote $X_{g,h} =\wtil (p_g ,p_h)$, for $g,h\in K_4,$ so that $\wtil $  is determined by $16$ variables.  The normalization condition gives us
\begin{equation} \label{equation1}
\sum_{g\in K_4} \wtil (p_g ,p_h )= \sum_{g\in K_4} X_{g,h} =\epsilon (p_h )=\left\{ \begin{array}{lcr} 1 & \mbox{ if } & h=e , \\ 0 & \mbox{ if } & h\neq e . \end{array} \right. 
\end{equation} and 
\begin{equation} \label{equation1.2}
\sum_{h\in K_4} \wtil (p_g ,p_h )= \sum_{h\in K_4} X_{g,h} =\epsilon (p_g )=\left\{ \begin{array}{lcr} 1 & \mbox{ if } & g=e , \\ 0 & \mbox{ if } & g\neq e . \end{array} \right. 
\end{equation}
From  condition  (\ref{wtil}), we have the following equations, for $g,h, f \in K_4$,
\begin{equation} 
\sum_{\substack{r\in \langle a \rangle \\
				  s\in K_4}}
X_{s,f h\m s} \ X_ {r\m g , s\m h} =
\sum_{\substack{ x\in \langle a \rangle \\
				  y\in K_4}} 
		X_{x\m y, h g\m y }\  X_ {y\m g , f }
\end{equation}
whereas from (\ref{globcocycle1}), we obtain 
\begin{equation} \label{equation2}
\omega (p_g ,p_h ) =\sum_{r,s\in K_4 } \lambda (p_r )\lambda (p_s ) \wtil (p_{r^{-1}g} , p_{s^{-1}h})=\frac{1}{4} \sum_{r,s\in \langle a \rangle } X_{r^{-1}g , s^{-1}h}, 
\end{equation}  with  $g,h\in K_4 .$
Next, writing  $Y_{g,h} =\wtil^{-1} (p_g , p_h )$ we have also  the equations
\begin{equation} \label{equation3}
(\wtil   \ast \wtil^{-1})  (p_g , p_h ) = \sum_{r,s\in K_4} X_{r,s} Y_{r^{-1}g ,s^{-1}h} =\epsilon (p_g) \epsilon (p_h) =\left\{ \begin{array}{lcr} 1 & \mbox{ if } & (g,h)=(e,e) , \\ 0 & \mbox{ if } & (g,h)\neq (e,e), \end{array} \right. 
\end{equation} as well as 
\begin{equation} \label{equation2.1}
\omega ' (p_g ,p_h ) =\sum_{r,s\in K_4 } \lambda (p_r )\lambda (p_s ) \wtil \m (p_{r^{-1}g} , p_{s^{-1}h})=\frac{1}{4} \sum_{r,s\in \langle a \rangle } Y_{r^{-1}g , s^{-1}h}, 
\end{equation}  with  $g,h\in K_4 ,$ the latter coming from (\ref{globcocycle2}). Direct computations show that 
$$X_{1,1} = Y_{1,1} = X_{1,b}=Y_{1,b} = X_{b,1}=Y_{b,1} = -X_{b,b}=-Y_{b,b} =\frac{1}{2},$$  $$X_{ab^i,b^j} = Y_{ab^i,b^j} = X_{a^i,ab^j} = Y_{b^i,ab^j} =X_{ab^i,ab^j} = Y_{ab^i,ab^j} =0,$$ $i,j=0,1,$ is a solution for  (\ref{equation1})-(\ref{equation2.1}), showing that our symmetric twisted partial action is globalizable.
\end{ex}

}


\end{document}